\definecolor{hot}{RGB}{65,105,225} 
\theoremstyle{definition} 
\newtheorem{thm}{Theorem}[section]
\newtheorem{conj}[thm]{Conjecture}
\theoremstyle{definition}
\newtheorem{theorem}[thm]{Theorem}
\newtheorem{prop}[thm]{Proposition}
\newtheorem{question}[thm]{Question}
\numberwithin{equation}{section}
\newcommand{\RR}{\mathbb{R}}
\newcommand{\CC}{\mathbb{C}}
\title{Real circles tangent to 3 conics}
\author[Breiding]{Paul Breiding}
\address{
Paul Breiding \\
University of Osnabr\"uck \\ Osnabr\"uck, Germany
}
\email{pbreiding@uni-osnabrueck.de}
\urladdr{\url{https://pbrdng.github.io/}}
\author[Lindberg]{Julia Lindberg}
\address{
Julia Lindberg \\
Max-Planck Institute \\ Leipzig, Germany
}
\email{julia.lindberg@mis.mpg.de}
\urladdr{\url{https://sites.google.com/view/julialindberg/home}}
\author[Ong]{Wern Juin Gabriel Ong}
\address{
Wern Juin Gabriel Ong \\
Bowdoin College \\ Brunswick, Maine, USA
}
\email{gong@bowdoin.edu}
\urladdr{\url{https://wgabrielong.github.io/}}
\author[Sommer]{Linus Sommer}
\address{
Linus Arthur Sommer\\
University of Leipzig \\ Leipzig, Germany
}
\email{ls67rynu@studserv.uni-leipzig.de}
\begin{document}

\maketitle

\begin{abstract}
In this paper we study circles tangent to conics. We show there are generically $184$ complex circles tangent to three conics in the plane and we characterize the real discriminant of the corresponding polynomial system. We give an explicit example of $3$ conics with $136$ real circles tangent to them. We conjecture that 136 is the maximal number of real circles. Furthermore, we implement a hill-climbing algorithm to find instances of conics with many real circles, and we introduce a machine learning model that, given three real conics, predicts the number of circles tangent to these three conics.
\end{abstract}

\section{Introduction}
Problems of tangency have been of interest since early geometry. Apollonius, in the $`E\pi\alpha\phi\alpha\Acute{\iota}$ (\emph{Tangencies}), asked the following question: Given three circles in the plane, how many circles are tangent to all three? He showed that the answer is 8 in general. In $1848$ Steiner asked the related question of how many conics are tangent to five generic conics. Steiner conjectured that there are $6^5$ such conics, meeting the B\'ezout bound of the corresponding polynomial system. In $1859$ and $1864$ Jonqui\`eres and then Chasles established the correct answer of $3264$. In this paper we study a problem which sits in between: 
\begin{question}\label{ques:C_circles}
    Given three general conics $Q_1,Q_2,Q_3 \subseteq \mathbb{C}^2$, how many circles are tangent to all three conics?
    \end{question}

\begin{figure}
\begin{center}
\includegraphics[width = 0.9\textwidth]{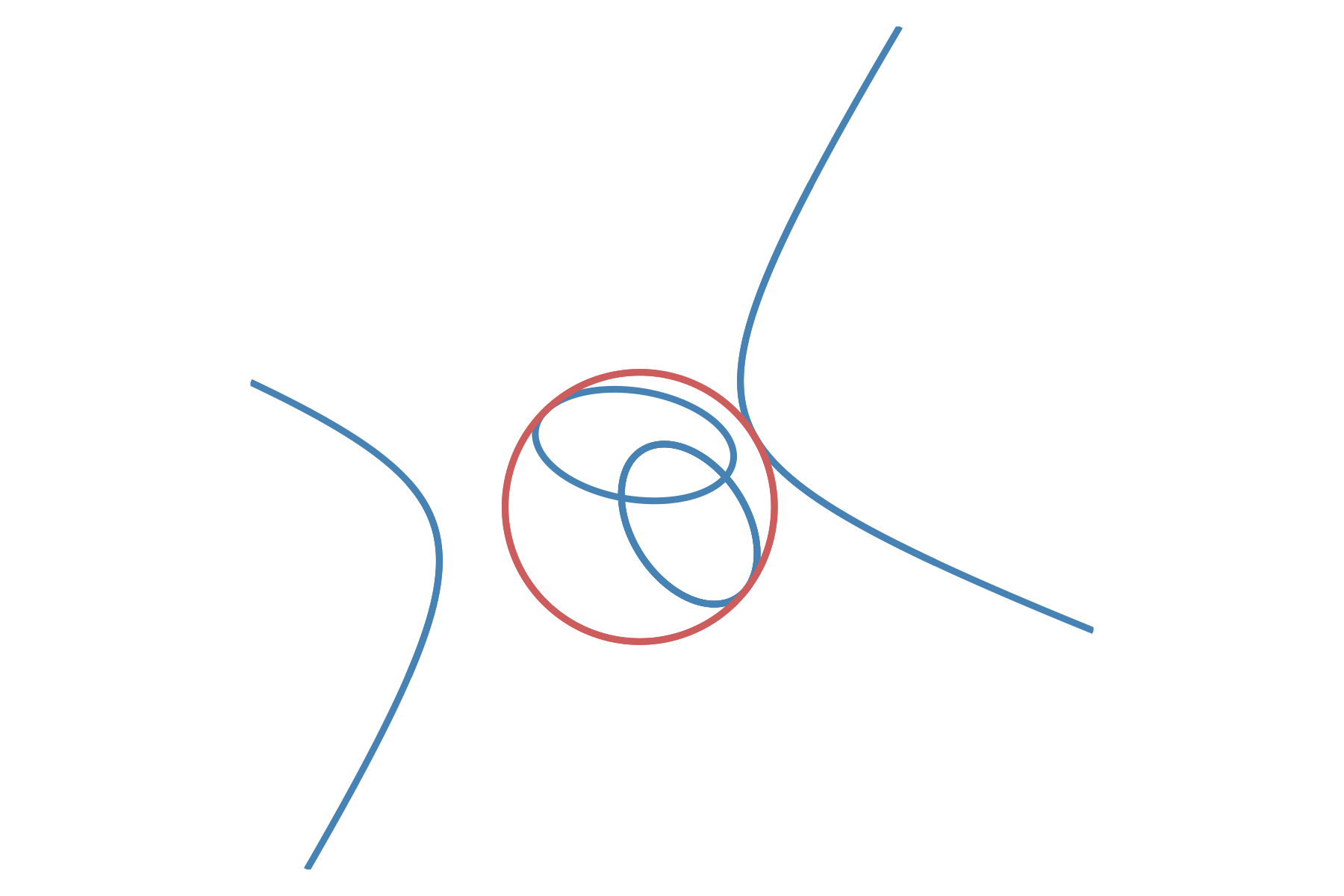}
\end{center}
\caption{A red circle tangent to one blue hyperbola and two blue ellipses.}
\end{figure}

Circles are defined by three numbers -- the coordinates of the center and the radius. Thus, we expect that the answer to our question is a finite number. Indeed, Emiris and Tzoumas \cite{ApollonianCircles} showed that there are \emph{at most} 184 circles tangent to three general conics. We show in the next section that this bound is attained for generic conics.

Next, we turn our attention to the real version of \Cref{ques:C_circles}.
By a real conic we mean a conic whose defining equation has real coefficients.
For three real circles the~8 Appolonius circles are real circles. The real version of Steiner's problem was studied only recently. In \cite{3264RealConics} the authors prove that there exist five real conics such that all $3264$ conics tangent to them are real. In \cite{breiding20203264}, the authors use numerical algebraic geometry to explicitly find such an instance and compute all $3264$ real conics. Such arrangements are called \emph{fully real}. In the same spirit we pose the following question.
\begin{question}\label{ques:R_circles}
    Given three general \emph{real} conics $Q_1,Q_2,Q_3 \subseteq \mathbb{R}^2$, how many \emph{real} circles are tangent to all three conics?
\end{question}
Here, we have the following result. 
\begin{theorem}\label{thm:136_R_intro}
There is an instance of three real conics $Q_1,Q_2,Q_3 \subseteq \mathbb{R}^2$, such that there are 136 real circles tangent to these three conics.
\end{theorem}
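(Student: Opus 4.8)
\section*{Proof proposal}

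The plan is to prove \Cref{thm:136_R_intro} by exhibiting an explicit triple of real conics together with a rigorously certified count of the real solutions of the associated polynomial system. First I would set up the tangency conditions algebraically. Writing a circle as $C_{a,b,r}=\{(x-a)^2+(y-b)^2=r\}$ with $r$ the squared radius, the condition that $C_{a,b,r}$ be tangent to a conic $Q_i$ is that the subscheme of $\mathbb{C}^2$ cut out by $C_{a,b,r}$ and $Q_i$ be non-reduced; eliminating $x$ and $y$, this is a single polynomial equation $T_i(a,b,r)=0$ whose coefficients are polynomials in the coefficients of $Q_i$. This gives a square system $T_1=T_2=T_3=0$ in the three unknowns $(a,b,r)$, which by the discussion of the Emiris--Tzoumas bound in the next section has exactly $184$ solutions for generic $Q_1,Q_2,Q_3$.

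Second, using the hill-climbing search described later in the paper, I would produce explicit real conics $Q_1,Q_2,Q_3$, reported with exact rational (or otherwise exactly representable) coefficients, for which a numerical homotopy continuation run returns $184$ solutions of $T_1=T_2=T_3=0$, of which $136$ appear numerically to be real with $r>0$. Third --- and this is the step that upgrades the numerics to a proof --- I would certify the output via Smale's $\alpha$-theory (for instance the \texttt{certify} routine of \texttt{HomotopyContinuation.jl}): for each numerical approximation this produces a ball provably containing a unique exact solution of the system; verifying that these $184$ balls are pairwise disjoint certifies that there are exactly $184$ complex solutions, and verifying which balls are invariant under complex conjugation (equivalently, that the conjugate of each approximation lies in the same ball, while the remaining balls come in conjugate pairs) certifies that exactly $136$ of them are real. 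One must additionally confirm, again by interval arithmetic on the certified boxes, that each of these $136$ real triples $(a,b,r)$ has $r>0$ and corresponds to a genuine real tangency, i.e.\ the contact point is real and finite and not an artifact of the line at infinity or of a complex point of $Q_i$; this rules out spurious components of $\{T_i=0\}$ and finishes the argument.

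The main obstacle is not the certification, which with current software is essentially a finite and routine check once a good instance is in hand, but the \emph{search}: locating a configuration of three conics whose real solution count reaches $136$. This is a high-dimensional, strongly non-convex optimization over the space of conic triples, and the effort of designing the hill-climbing heuristic --- and of supplying it with promising starting configurations --- is what ultimately makes the explicit witnessing instance available. A secondary technical point to watch is numerical conditioning: near a record-high configuration the $184$ solutions can cluster, so the homotopy path tracking and the subsequent $\alpha$-certification must be run at sufficiently high precision for the certified balls to separate.
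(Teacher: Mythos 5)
Your proposal takes essentially the same route as the paper: an explicit instance of three conics found by Dietmaier-style hill climbing, solved by homotopy continuation, with the count of $136$ real solutions then upgraded to a proof via a posteriori certification (interval arithmetic/$\alpha$-theory in \texttt{HomotopyContinuation.jl}). The only substantive differences are that the paper certifies the $9\times 9$ system \eqref{polynomial_system} with the tangency points kept as unknowns --- so reality and finiteness of the contact points are part of the certified data and no argument is needed that your eliminated system $T_1=T_2=T_3=0$ has exactly $184$ solutions without spurious components --- and that the paper seeds the hill climbing with a small perturbation of the degenerate triangle configuration from \Cref{thm:136_R}, which is precisely what guarantees a $136$-instance exists nearby and makes the search succeed.
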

\Cref{ques:R_circles} is much more subtle than \Cref{ques:C_circles}. Of course, the answer to \Cref{ques:C_circles} gives an upper bound to \Cref{ques:R_circles} but it is non-trivial to verify whether or not that upper bound is tight.
In fact, we are not able to prove that 136 is the maximal number. In \cite{3264RealConics} the authors show that a fully real instance of Steiner' conic problem  exist in a neighborhood of the degenerate case where all five conics are double lines and each line intersects in the vertex of a regular pentagon. If we make the same construction for our circle problem, we find a maximum of $136$ real tritangent circles, not 184. The reason is that there are $4$ real conics that are tangent to $2$ lines and pass through $3$ points, but only $2$ real circles that are tangent to $2$ lines and pass through $1$ point \cite{CirclesAndSpheres} (circles are conics which pass through the two special \emph{circular points} $\circ_{+}:=[1:i:0], \circ_{-}:=[1:-i:0]$ in~$\mathbb{P}^{2}_{\mathbb{C}}$; see \Cref{circular_points}). This discrepancy is the reason why we get 136 instead of 184 real circles using the strategy from~\cite{3264RealConics}. This and computational evidence leads us to state the following conjecture.
\begin{conj}\label{conj_real}
The maximal number of real circles tangent to three conics is 136.
\end{conj}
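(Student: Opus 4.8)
We outline a line of attack; carrying it out, especially the last step, is the content of the statement and is at present open. Since the polynomial system underlying \Cref{ques:C_circles} has $184$ complex solutions for generic conics and has real coefficients when the conics are real, its non-real solutions occur in complex conjugate pairs, so the number of real tritangent circles is always even and at most $184$, and \Cref{thm:136_R_intro} exhibits the value $136$. The conjecture is the assertion that the values $138,140,\dots,184$ are never attained. There are two complementary approaches, and we expect the conjecture to require (a refinement of) the second.

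The first approach is via the real discriminant $\Delta$ of the system. On the parameter space of triples of real conics -- an open dense subset of $(\mathbb{P}^5_{\mathbb R})^3$ -- the number of real tritangent circles is locally constant away from $\Delta$, and crossing a smooth wall of $\Delta$ changes it by exactly $\pm 2$. It would suffice to enumerate the connected chambers of the complement of $\Delta$ together with their real counts, or merely to bound the maximal count over all chambers. The plan here is to combine the explicit description of the components of $\Delta$ with a certified numerical exploration (in the spirit of \cite{breiding20203264}) of the chambers adjacent to the one realizing \Cref{thm:136_R_intro}. The obstacle is that this yields only a \emph{local} statement: $\Delta$ has very large degree, there is no a priori bound on how many walls one may cross while increasing the real count, and a global certified chamber decomposition is out of reach, so this approach alone cannot close the conjecture.

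The second, more promising approach is to analyse the degeneration used to prove \Cref{thm:136_R_intro}, in which each conic $Q_i$ specializes to a double line $2L_i$ with the $L_i$ in the prescribed position. Along a one-parameter family $Q_i(s)\to 2L_i$ as $s\to 0$, the $184$ solution circles partition into a bounded cluster $F$ converging to honest circles adapted to the limiting line arrangement, and a complementary set converging to degenerate limits in the chosen compactification of the space of circles -- principally the ``double line at infinity'' (the locus of circles of infinite radius, which is precisely where the deficit between $184$ and the naive B\'ezout/BKK bound is absorbed, cf.\ \cite{ApollonianCircles}). On $F$ one invokes the discrepancy explained above: the count ``$4$ complex conics tangent to $2$ lines and through $3$ points, all realizable over $\mathbb R$'' is replaced, for circles, by ``$4$ complex but at most $2$ real circles tangent to $2$ lines and through $1$ point'' (\cite{CirclesAndSpheres}), which caps the real part of $F$ at the value $136$ realized in \Cref{thm:136_R_intro}; the contribution of the degenerate clusters is controlled by a Newton--Puiseux analysis of the equations at each degenerate limit point, bounding how many of the clustered circles can become real for small $s>0$. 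The goal is to show that the total real count in this degeneration is at most $136$, and -- the crucial point -- that the global maximum over \emph{all} triples of real conics is attained in a neighborhood of such a degeneration.

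The main obstacle is exactly this last point. There is no general principle forcing the maximal number of real solutions of a polynomial system to occur near a maximally degenerate configuration; a priori some non-degenerate chamber could do strictly better, and ruling this out is what makes the conjecture hard. A complete proof therefore seems to require either a full, certified description of the chamber structure of $\Delta$, or a genuinely new obstruction of Galois-theoretic or geometric type -- for instance an imprimitivity block structure on the $184$ solutions (refining the eight Apollonius ``sign types'' of internal/external tangency) on which complex conjugation is forced to act nontrivially whenever too many circles would be real -- that bounds the real count by $136$ independently of any degeneration. Identifying such an obstruction, or otherwise proving optimality of the construction in \Cref{thm:136_R_intro}, is the key open problem; the hill-climbing and machine-learning experiments reported below are consistent with the bound $136$ but do not establish it.
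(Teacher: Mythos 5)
The statement you were given is a \emph{conjecture} in the paper, not a theorem: the paper itself offers no proof, only supporting evidence. Your ``proposal'' correctly recognizes this and never claims to close the gap, so in that sense it is an honest assessment rather than a proof, and it is consistent with the paper's own treatment.

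The two approaches you sketch are precisely the two lines of evidence the paper advances. Your ``second approach'' is a restatement of the argument behind \Cref{thm:136_R}: the degeneration to a triangle of double lines $2L_i$ with marked points $p_i\in L_i$, the replacement of the count ``$4$ real conics tangent to $2$ lines through $3$ points'' by ``$2$ real circles tangent to $2$ lines through $1$ point'' from \cite{CirclesAndSpheres}, giving $17$ real circles for the pointed triangle, and then the factor $2^3$ from splitting each $(p_i,L_i)$ into a nearby hyperbola, yielding $2^3\cdot 17=136$. The paper proves exactly this \emph{local} statement and no more. Your ``first approach'' corresponds to \Cref{thm:real_disc}, the explicit characterization of the real discriminant, which the paper presents only as a first step; it likewise does not yield a global chamber enumeration. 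Your identification of the crucial missing ingredient --- that there is no general principle forcing the global maximum of real solutions to occur near a maximally degenerate configuration, so a non-degenerate chamber could a priori beat $136$ --- is exactly the obstruction the paper concedes (``it does not exclude the possibility that such conics do exist''), and is why the statement remains a conjecture. One minor caveat: your claim that crossing a wall of $\Delta$ changes the real count by exactly $\pm 2$ holds at a generic smooth point of $\Delta$, but not at points where several components of $\Delta$ meet or where $\Delta$ is singular; a chamber-walk argument would need to account for this.

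In short: you have not proved the conjecture, the paper has not proved it either, and your analysis of why it is hard and what evidence supports it agrees with the paper's. No genuine gap beyond the one you yourself identify.
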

As a first step towards proving \Cref{conj_real} we give insight to \Cref{ques:R_circles} by characterizing the \emph{real discriminant} of our tangency problem.

In the last part of the paper we approach \Cref{ques:R_circles}  computationally. In Section~\ref{sec:hillClimbing} we implement a \emph{hill climbing algorithm}, described in \cite{LinearProgram}, to find explicit conics that have many real tritangent circles. For instance, we use the algorithm to find for every even number $0\leq n\leq 136$ an arrangement of conics such they have exactly~$n$ real tritangent circles; see \Cref{thm:all are possible}. In addition, we introduce a machine learning model that, given three real conics $Q_1, Q_2, Q_3$, predicts the number of real circles tangent to these conics. We do this using supervised learning on training data generated with the help of the hill climbing algorithm.

Our code and all the data we generated is available on our \texttt{MathRepo} page
\medskip

\begin{center}
\url{ https://mathrepo.mis.mpg.de/circlesTangentConics}
\end{center}
\medskip

\subsection{Outline of paper}  In \Cref{sec:circles} we answer \Cref{ques:C_circles} by showing that for three general conics there are $184$ tritangent complex circles. We then classify the real discriminant of our tangency problem and show that there exists conics that have $136$ real tritangent circles. \Cref{sec:hillClimbing} outlines the hill-climbing algorithm, while \Cref{sec:ml} explores the application of machine learning to predicting the real solution count.

\subsection{Acknowledgements}
We thank Taylor Brysiewicz for explaining to us the basic principles of the hill climbing algorithm in \Cref{sec:hillClimbing}. We also thank Guido Mont\'{u}far, Rainer Sinn, and Bernd Sturmfels for helpful conversations. 

Breiding is funded by the Deutsche Forschungsgemeinschaft (DFG, German Research Foundation) -- Projektnummer 445466444.

Ong thanks Bowdoin College for support. 

This project was initiated in an REU program organized by Rainer Sinn at the MPI MiS Leipzig. We thank MPI MiS for providing space for collaboration.

\section{Real and complex circles tangent to three general conics}\label{sec:circles}
We begin by outlining the problem formulation under consideration. We work in an affine chart of $\mathbb{P}^{2}_{\mathbb{C}}$ that we identify with $\mathbb{C}^{2}$. Recall that a conic in the plane is the set of $(x,y) \in \CC^2$ satisfying the equation:
\begin{equation}\label{def_Q}
Q(x,y)=ax^{2}+bxy+cy^{2}+dx+ey+f = 0
\end{equation}
where $a,b,c,d,e,f \in \CC$ and a circle of radius $r$ centered at $(s,t) \in \CC^2 $ is given by~$(x,y) \in \CC^2$ that satisfy:
$$C(x,y)= (x-s)^{2}+(y-t)^{2}-r^{2} = 0.$$
The conic and circle intersect in 4 points, counting multiplicity and including points at infinity, so long as $Q$ and $C$ are irreducible and distinct. 
A point $(x,y)\in\mathbb{C}^{2}$ satisfying the two equations $Q(x,y)=C(x,y)=0$ is a point of tangency if and only if it has multiplicity at least two, or equivalently that the determinant of the Jacobian of $Q$ and $C$ vanishes:
$$
\det \big( [\nabla Q(x,y) \ \ 
\nabla C(x,y)] \big) = 0.$$
Here, 
$\nabla Q(x.y) = (\tfrac{\partial Q}{\partial x},\ \tfrac{\partial Q}{\partial y})^T$
denotes the \emph{gradient} of $Q$.
We denote this as 
$$\nabla Q(x,y) \wedge \nabla C(x,y) := \det \big( [\nabla Q(x,y) \ \ 
\nabla C(x,y)] \big).$$
This allows us to rephrase the tritangent circles problem as the set of solutions of a polynomial system. Let fix three conics 
\begin{align*}
    Q_{1}(x,y) &= a_{1}x^{2} + a_{2}xy + a_{3}y^{2} + a_{4}x + a_{5}y + a_{6} \\
    Q_{2}(x,y) &= b_{1}x^{2} + b_{2}xy + b_{3}y^{2} + b_{4}x + b_{5}y + b_{6} \\
    Q_{3}(x,y) &= c_{1}x^{2} + c_{2}xy + c_{3}y^{2} + c_{4}x + c_{5}y + c_{6}
\end{align*}
Let $(u_{i},v_{i})$ be the points of tangency on $Q_{i}$ (defined by $f_{1}, f_{2}, f_{3}$). A circle $C$ is tangent to all three of $Q_{1},Q_{2},Q_{3}$ if and only if the following conditions holds. First, $(u_{i},v_{i})\in Q_i$ for $1\leq i\leq 3$. This is formulated by the following three polynomials
\begin{align*}
    f_{1} &= a_{1}u_{1}^{2} + a_{2}u_{1}v_{1} + a_{3}v_{1}^{2} + a_{4}u_{1} + a_{5}v_{1} + a_{6}\\\nonumber
    f_{2} &= b_{1}u_{2}^{2} + b_{2}u_{2}v_{2} + b_{3}v_{2}^{2} + b_{4}u_{2} + b_{5}v_{2} + b_{6}\\\nonumber
    f_{3} &= c_{1}u_{3}^{2} + c_{2}u_{3}v_{3} + c_{3}v_{3}^{2} + c_{4}u_{3} + c_{5}v_{3} + c_{6}
\end{align*}
Moreover, $(u_{i},v_{i})\in C$ for $1\leq i\leq 3$. For this we have again three polynomials
\begin{align*}
    f_{4} &= (u_{1} - s)^{2} + (v_{1} - t)^{2} - r^{2}\\\nonumber
    f_{5} &= (u_{2} - s)^{2} + (v_{2} - t)^{2} - r^{2}\\\nonumber
    f_{6} &= (u_{3} - s)^{2} + (v_{3} - t)^{2} - r^{2}
\end{align*}
Finally,  $\nabla Q_{i}(u_{i},v_{i}) \wedge \nabla C(u_{i},v_{i})=0$ for $1\leq i\leq 3$, which is given by 
\begin{align*}
    f_{7} &= 2(u_{1}-s)(a_{2}u_{1} + 2a_{3}v_{1} + a_{5}) - 2(v_{1}-t)(2a_{1}u_{1} + a_{2}v_{1} + a_{4})\\\nonumber
    f_{8} &= 2(u_{2}-s)(b_{2}u_{2} + 2b_{3}v_{2} + b_{5}) - 2(v_{2}-t)(2b_{1}u_{2} + b_{2}v_{2} + b_{4}) \\\nonumber
    f_{9} &= 2(u_{3}-s)(c_{2}u_{3} + 2c_{3}v_{3} + c_{5}) - 2(v_{3}-t)(2c_{1}u_{3} + c_{2}v_{3} + c_{4})
\end{align*}
These $3$ types of constraints define a parametrized polynomial system of equations
\begin{equation}\label{polynomial_system}
F(x;p)=(f_{1},\dots,f_{9})^{T} = 0
\end{equation}
in the 9 variables $x=(u_{1}, v_{1}, u_2,v_2, u_{3},v_{3}, s, t, r)$ and  18 parameters given by the coefficients of each conic $p=(a_{1}, \ldots, a_{6}, b_{1}, \ldots, c_{6})$.
For a fixed set of parameters $p$ defining three conics, a solution $x \in \CC^9$ to the polynomial system $F(x;p)=0$ gives a circle with center $(s,t)$ and radius~$r$ that are tangent to $Q_1,Q_2$ and $Q_3$ at $(u_{1},v_{1}), \ldots, (u_{3},v_{3})$ respectively. 

\subsection{Complex circles tangent to three conics}
We begin by answering \Cref{ques:C_circles}. Observe that the B\'ezout bound of \eqref{polynomial_system} is $2^9 = 512$ which is strict in this case, as \cite{ApollonianCircles} shows\footnote{The authors show that there are at most $184$ circles tangent to three general ellipses. Since the space of ellipses is an open set in the space of conics, this bound applies to conics as well.} that there are at most $184$ circles tangent to three conics. We show this bound is attained for generic conics.

As in the case of Steiner's problem, the excess solutions arise in part from the locus of double lines. These double lines meet every conic at a point with multiplicity two, and are hence counted as tangent, regardless if the underlying reduced line is tangent or not.

Recall from \Cref{def_Q} that a conic is defined by 6 coefficients, so we can represent a conic by a point in $\mathbb P^5_{\mathbb C}$.
The locus of double lines is then precisely the image of the map from $\mathbb{P}^{2}_{\mathbb{C}}$, the space of lines, to $\mathbb{P}^{5}_{\mathbb{C}}$ by
$$[a:b:c]\mapsto[a^{2}:2ab:b^{2}:ac:bc:c^{2}].$$
This is the Veronese embedding, which we denote $V$. We can eliminate this excess intersection by blowing up our space of conics along $V$. Denote $X:=\mathrm{Bl}_{V}(\mathbb{P}^{5}_{\mathbb{C}})$ the blowup of $\mathbb{P}^{5}_{\mathbb{C}}$ along the Veronese surface $V$ and $\pi:\mathrm{Bl}_{V}(\mathbb{P}^{5}_{\mathbb{C}})\to\mathbb{P}^{5}_{\mathbb{C}}$ the blowing down morphism. The algebraic variety $X$ is called the \emph{space of complete conics}.

Fix a general point $p\in \mathbb{P}^{2}$, a general line $\ell \subset \mathbb{P}^{2}$ and a general conic $Q\subset \mathbb{P}^{2}$. Let $H_{p}, H_{\ell}, H_{Q}$ be the hypersurfaces in $\mathbb{P}^{5}_{\mathbb{C}}$ corresponding to conics that pass through~$p$, are tangent to $\ell$, or tangent to $Q$, respectively. We denote by $\widetilde{H_{p}}, \widetilde{H_{\ell}}, \widetilde{H_{Q}}$ the classes of $\pi^{-1}(H_p), \pi^{-1}(H_\ell), \pi^{-1}(H_Q)$ in the \emph{Chow ring} of $X$; see, e.g., \cite[Chapter 1]{3264AllThat} for more information on Chow rings and how they are used. Furthermore, we denote by $E$ the class of~$\pi^{-1}(V)=E\subseteq X$, called \emph{exceptional divisor}, away from which the blowing down map is an isomorphism of algebraic varieties. 

Recall that we want to count the number of conics that fulfill some intersection conditions. This corresponds to counting the number of points in an algebraic set of dimension zero that is defined by the intersection of  $\widetilde{H_{p}}, \widetilde{H_{\ell}}$ and $\widetilde{H_{Q}}$ in $X$. The classes $\widetilde{H_{p}}, \widetilde{H_{\ell}}, \widetilde{H_{Q}}$ do not intersect in the exceptional divisor $E$; see, e.g., \cite[p. 749-56]{PrinciplesAG}. This means that they meet transversely away from the subvariety of singular conics and have no common points in $E$. Thus any intersection problem involving conics, that contain a general point, or are tangent to a general line or a general conic, can be computed by taking the degree of the product of the corresponding classes in the Chow ring.

Let now $C\subseteq\mathbb{P}^{2}_{\mathbb{C}}$ be a circle. We can describe $C$ a circle with center $[s:t:r]$ and radius $r$ as the vanishing locus of the equation $(x-sz)^{2}+(y-tz)^{2}-r^{2}z^{2}\in\mathbb{C}[x,y,z]$. Note that $C$ passes through the circular points 
\begin{equation}\label{circular_points}
\circ_{+}:=[1:i:0]\quad \text{ and }\quad \circ_{-}:=[1:-i:0]
\end{equation}and conversely that any conic passing through $\circ_{+}, \circ_{-}$ is in fact a circle. Circles, then, are conics that pass through the circular points $\circ_{+},\circ_{-}$. We wish to enumerate the number of circles mutually tangent to three conics. By the above, these are precisely the conics mutually tangent to three general conics that also pass through the two circular points. The number of circles tangent to three general conics are thus given by the intersection product~$\widetilde{H_{Q}}^{3}\cdot\widetilde{H_{\circ_{+}}}\cdot\widetilde{H_{\circ_{-}}}$. 

Emiris and Tzoumas \cite{ApollonianCircles} used this observation and computed the upper bound $\widetilde{H_{Q}}^{3}\cdot\widetilde{H_{p}}
^2=184$, where $p$ is a general point in the sense of intersection theory (see also the survey article by Kleiman and Thorup \cite{TK97}). A priori, it is not clear that taking $p$ to be the circular points $\circ_{+}$ or $\circ_{-}$ is general in the sense of intersection theory. Therefore, for completeness we give the full proof of the fact that the answer to \Cref{ques:C_circles} is~184.

\begin{prop}\label{thm_main}
Given three general conics $Q_1,Q_2,Q_3 \subseteq \mathbb{C}^2$, there are 184 circles tangent to these three conics.
\end{prop}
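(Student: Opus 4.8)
The plan is to compute the intersection number $\widetilde{H_Q}^3 \cdot \widetilde{H_{\circ_+}} \cdot \widetilde{H_{\circ_-}}$ in the Chow ring of the space of complete conics $X = \mathrm{Bl}_V(\mathbb{P}^5_{\mathbb{C}})$, and to verify that this number does in fact count tritangent circles (i.e.\ that the relevant intersection is transverse and supported away from $E$ and away from degenerate conics). First I would recall the standard presentation of $A^*(X)$: if $P$ denotes the pullback $\widetilde{H_p}$ of the hyperplane class (conics through a general point) and $L$ denotes $\widetilde{H_\ell}$ (conics tangent to a general line), then these generate, with the classical relations coming from the geometry of complete conics, and in particular the well-known values $P^5 = L^5 = 1$, $P^4 L = P L^4 = 2$, $P^3 L^2 = P^2 L^3 = 4$. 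One also has $\widetilde{H_Q} = 2P + 2L$, since a conic tangent to a general conic $Q$ is, in the complete-conics compactification, the class $2P + 2L$ (this is the computation underlying Steiner's $3264 = (2P+2L)^5$).

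Next I would express the two "circular point" conditions in these terms. Passing through $\circ_+$ (resp.\ $\circ_-$) is a linear condition on the coefficients of a conic, so as a divisor class on $\mathbb{P}^5$ it is a hyperplane, and its proper transform satisfies $\widetilde{H_{\circ_+}} = P - E$ and likewise $\widetilde{H_{\circ_-}} = P - E$: the hyperplane of conics through a fixed point meets the Veronese surface $V$ (it contains the double lines through that point, a conic's worth), so the pullback of the hyperplane picks up one copy of the exceptional divisor, exactly as for a general point $p$. The key point requiring care — and the main obstacle — is that $\circ_+$ and $\circ_-$ are \emph{not} independent general points in the sense of intersection theory: a double line can pass through both circular points simultaneously, namely the line at infinity $z = 0$ (doubled) and, more relevantly, any line through $\circ_+$ is automatically... no — one must check whether the pencil of lines through $\circ_+$ and the pencil through $\circ_-$ share a common member giving excess intersection in $E$, and whether $\widetilde{H_{\circ_+}} \cdot \widetilde{H_{\circ_-}}$ still equals $(P-E)^2$ with the expected intersection behavior against $E$. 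Concretely, I expect to need $E^k$-intersection data: on the exceptional divisor $E \cong \mathbb{P}(N_{V/\mathbb{P}^5})$, a $\mathbb{P}^2$-bundle over $V \cong \mathbb{P}^2$, one has the standard relations letting one evaluate monomials in $P$ and $E$; the upshot should be that $(P-E)^2 \cdot (2P+2L)^3$ equals the "naive" number $184$ precisely because the corrections supported on $E$ vanish — i.e.\ no double line is tangent to all three general conics while also passing through both circular points, since that would force the line to pass through $\circ_+$ and $\circ_-$, hence be the line at infinity, which is not tangent to three general (affine) conics.

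With the geometric input in hand, the computation itself is routine: expand
\[
\widetilde{H_Q}^3 \cdot \widetilde{H_{\circ_+}} \cdot \widetilde{H_{\circ_-}} = (2P+2L)^3 (P-E)^2 = 8(P+L)^3(P-E)^2,
\]
multiply out $(P+L)^3 = P^3 + 3P^2L + 3PL^2 + L^3$ and $(P-E)^2 = P^2 - 2PE + E^2$, and collect terms using $\deg$ on $A^5(X)$. The pure-$P$-and-$L$ part contributes $8(P^3+3P^2L+3PL^2+L^3)\cdot P^2 = 8(P^5 + 3P^4L + 3P^3L^2 + P^2L^3) = 8(1 + 3\cdot 2 + 3 \cdot 4 + 4) = 8 \cdot 23 = 184$; then I would show the remaining terms involving $E$ (i.e.\ $-2PE$ and $E^2$ times $8(P+L)^3$) sum to zero, which is exactly the statement that the classes $\widetilde{H_{\circ_+}}, \widetilde{H_{\circ_-}}, \widetilde{H_Q}$ have no common intersection in $E$ — the content of the cited transversality facts from \cite{PrinciplesAG} adapted to the circular points. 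Finally I would invoke the earlier discussion (that $\widetilde{H_p}, \widetilde{H_\ell}, \widetilde{H_Q}$ meet transversely away from singular conics) together with a Bertini-type argument for the specific linear conditions $\circ_\pm$ to conclude that for three general conics every one of the $184$ intersection points is a reduced point corresponding to an honest smooth tritangent circle, with no solutions lost to the line at infinity or to non-reduced conics. The delicate step throughout is the bookkeeping on $E$; everything else is the standard complete-conics calculus.
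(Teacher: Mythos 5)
Your overall strategy---computing $\widetilde{H_{Q}}^{3}\cdot\widetilde{H_{\circ_{+}}}\cdot\widetilde{H_{\circ_{-}}}$ on the space of complete conics via $\widetilde{H_{Q}}=2\widetilde{H_{p}}+2\widetilde{H_{\ell}}$ and the classical numbers $\widetilde{H_{p}}^{5}=1$, $\widetilde{H_{p}}^{4}\widetilde{H_{\ell}}=2$, $\widetilde{H_{p}}^{3}\widetilde{H_{\ell}}^{2}=\widetilde{H_{p}}^{2}\widetilde{H_{\ell}}^{3}=4$---is the paper's strategy, and your ``pure $P$ and $L$'' arithmetic $8(1+3\cdot 2+3\cdot 4+4)=184$ agrees with it. But the key step, determining the class imposed by the circular points, is wrong. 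You set $\widetilde{H_{\circ_{\pm}}}=P-E$ on the grounds that the hyperplane $H_{\circ_{\pm}}\subset\mathbb{P}^{5}_{\mathbb{C}}$ meets the Veronese surface $V$ in the (curve of) double lines through $\circ_{\pm}$. This confuses \emph{meeting} $V$ with \emph{containing} $V$: the coefficient of $E$ in the proper transform is the multiplicity of $H_{\circ_{\pm}}$ along $V$, i.e.\ the order of vanishing of its defining linear form along the whole surface $V$, and since $H_{\circ_{\pm}}$ contains only a curve of double lines (not all of $V$), that multiplicity is $0$. Hence $\widetilde{H_{\circ_{\pm}}}=\widetilde{H_{p}}+0\cdot E=\widetilde{H_{p}}$, exactly as for a general point --- which is precisely the verification that constitutes the paper's proof. (Your own phrase ``exactly as for a general point $p$'' is inconsistent with your definition $P=\widetilde{H_{p}}$: if the two behaved identically you would conclude $\widetilde{H_{\circ_{+}}}=P$, not $P-E$.)

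The error is not repairable by the geometric remark you append, because the intersection number is determined by the rational equivalence classes alone: with $P-E$ in place of $P$, the ``terms involving $E$'' cannot sum to zero. Indeed, using the standard relation $E=2P-L$ on the blow-up (the quadric hypersurface of conics tangent to a line contains $V$ with multiplicity $1$; see \cite{3264AllThat}), one has $P-E=L-P$, and then
$$(2P+2L)^{3}(P-E)^{2}=8\,(P+L)^{3}(L-P)^{2}=8\cdot(-10)=-80\neq 184,$$
so your proposed expansion would not return the correct count, and no statement about double lines avoiding $\circ_{+}$ and $\circ_{-}$ can change a class-level computation. Once $\widetilde{H_{\circ_{\pm}}}=\widetilde{H_{p}}$ is established as above, the remainder of your argument (the expansion of $(2\widetilde{H_{p}}+2\widetilde{H_{\ell}})^{3}\cdot\widetilde{H_{p}}^{2}$, the values of the mixed monomials, and the transversality away from the exceptional divisor needed to interpret the degree as an honest count of tritangent circles) coincides with the paper's proof.
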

\begin{proof}
We wish to compute $\widetilde{H_{Q}}^{3}\cdot\widetilde{H_{\circ_{+}}}\cdot\widetilde{H_{\circ_{-}}}$. We know that conics degenerate into flags, so the condition of being tangent to a conic $Q$ is equivalent to the condition that it contains either of two points or is tangent to either of two lines. This gives us the equality $\widetilde{H_{Q}}=2\widetilde{H_{p}}+2\widetilde{H_{\ell}}$; see also \cite[p.\ 775]{PrinciplesAG}.

We now seek to verify that 
$\widetilde{H_{p}}=\widetilde{H_{\circ_{+}}}, \widetilde{H_{\circ_{-}}}$. To show this, it suffices to show that the 4-planes in $\mathbb{P}^{5}_{\mathbb{C}}$ defined by conics passing through either of $\circ_{+},\circ_{-}$ do not contain the Veronese $V$ or, equivalently, that the equation defining the 4-plane vanishes to order zero on $V$; see \cite[p.\ 105]{CommAlg}. But since the hypersurfaces $H_{\circ_{+}}$ and $H_{\circ_{-}}$ do not contain the Veronese, we know that are $\widetilde{H_{\circ_{-}}}=\widetilde{H_{p}}+0\cdot E=\widetilde{H_{p}}$ and, similarly, $\widetilde{H_{\circ_{+}}} = \widetilde{H_{p}}$. Namely the circular points $\circ_{+}$ and $\circ_{-}$ are general in the sense of intersection theory.

Using the two facts above, we can enumerate the number of circles tangent to three general conics as the number of conics tangent to three general conics and passing through two general points. This gives us
\begin{align*}
    (2\widetilde{H_{p}}+2\widetilde{H_{\ell}})^{3}\cdot\widetilde{H_{\circ_{+}}}\cdot\widetilde{H_{\circ_{-}}} & = (2\widetilde{H_{p}}+2\widetilde{H_{\ell}})^{3}\cdot\widetilde{H_{p}}^{2} \\
    &= 8\cdot\widetilde{H_{p}}^{5} + 24\cdot\widetilde{H_{p}}^{4}\cdot\widetilde{H_{\ell}} + 24\cdot\widetilde{H_{p}}^{3}\cdot\widetilde{H_{\ell}}^{2} + 8\cdot\widetilde{H_{p}}^{2}\cdot\widetilde{H_{\ell}}^{3}. 
\end{align*}
Over $\mathbb{P}^{2}_{\mathbb{C}}$ there is one conic through 5 general points $\widetilde{H_{p}}^{5}=1$,  two conics through four general points and tangent to one general line $\widetilde{H_{p}}^{4}\cdot\widetilde{H_{\ell}}=2$, four conics through three general points and tangent to two general lines $\widetilde{H_{p}}^{3}\cdot\widetilde{H_{\ell}}^{2}=4$, and four conics through two general points and tangent to three general lines $\widetilde{H_{p}}^{2}\cdot\widetilde{H_{\ell}}^{3}=4$; see \cite[p.\ 307]{3264AllThat}. 
Making the appropriate substitutions yields 
$$8\cdot 1 + 24\cdot 2 + 24\cdot 4 + 8\cdot 4=184$$
which gives the claim. 
\end{proof}

\subsection{Real circles tangent to three conics} 
We now turn our attention to understanding to \Cref{ques:R_circles}. 

In the real version of Steiner's problem of finding the maximum number of real conics tangent to 5 general conics  \cite{3264RealConics}, the authors show that in a neighborhood of the corresponding real discriminant where all five conics are singular, there is an open cell where the number of real solutions achieves the complex upper bound. Using this same idea, we show that in our case this neighborhood produces at most $136$ real solutions. Notice that the following theorem, in particular, proves \Cref{thm:136_R}.

\begin{figure}
\begin{center}
\includegraphics[width = 0.9\textwidth]{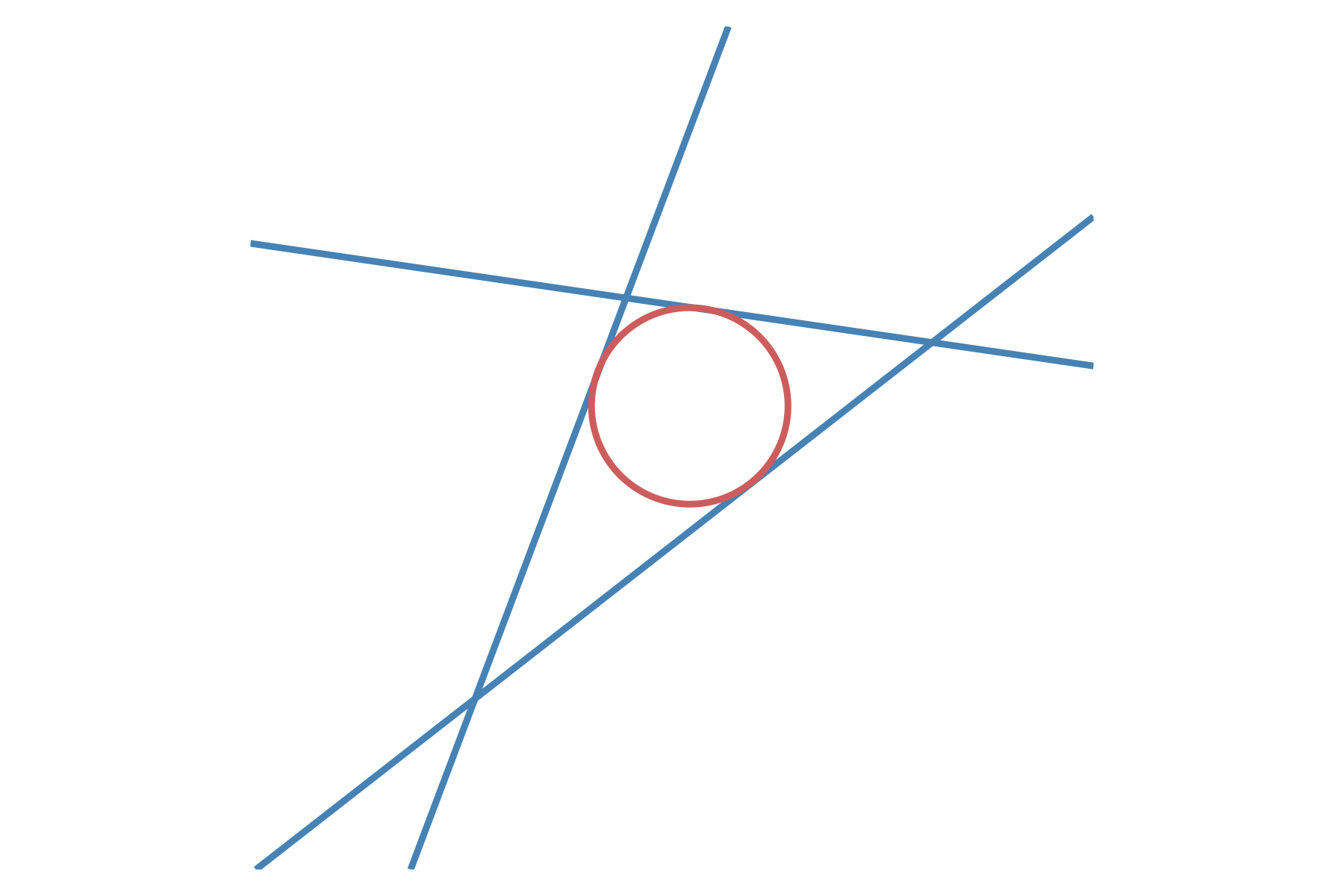}
\end{center}
\caption{\label{fig2}
A red circle tangent to three blue lines that form a triangle. The proof of \Cref{thm:136_R} considers such a triangle and finds a nearby arrangement of three hyperbolas, such that there are 136 real circles tangent to the three hyperbolas. In fact, the arrangement of hyperbolas in the proof of \Cref{thm:136_R_intro} is near the triangle shown in the picture.
}
\end{figure}

\begin{theorem}\label{thm:136_R}
Let $Q_1,Q_2,Q_3$ be three real conics in the plane such that $Q_1,Q_2,Q_3$ are all singular. There are at most $136$ real circles tangent to three conics in a neighborhood of $Q_1,Q_2,Q_3$. 
\end{theorem}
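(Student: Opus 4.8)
The plan is to specialize the three conics to pairs of distinct real lines and to redo the enumeration behind \Cref{thm_main} in this degenerate picture, while tracking which of the $184$ solutions can be real. Write $Q_i=\ell_i\cup\ell_i'$ with vertex $p_i:=\ell_i\cap\ell_i'$, and take the six lines in general position; the remaining singular configurations (a double line, or further degeneracies such as concurrent lines or collinear vertices) are specializations of this one and, as indicated at the end, only lower the counts, so it suffices to treat this case. Using the degeneration of the tangency condition recalled before \Cref{thm_main} --- in the space of complete conics the cycle $\widetilde{H_{Q_i}}$ specializes to $\widetilde{H_{\ell_i}}+\widetilde{H_{\ell_i'}}+2\,\widetilde{H_{p_i}}$, the coefficient $2$ being a genuine limiting multiplicity (see \cite[p.~749--756]{PrinciplesAG}) --- every circle tangent to $Q_1,Q_2,Q_3$ for a parameter $p$ near $p_0:=(Q_1,Q_2,Q_3)$ limits, as $p\to p_0$, to a circle that for each $i$ is tangent to $\ell_i$, or tangent to $\ell_i'$, or passes through $p_i$; such a limit circle is approached along a tangent-to-a-line branch with local multiplicity $1$, and along a through-the-vertex branch with local multiplicity $2$. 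The first step is to make this specialization precise, including that the limiting conic is still a genuine circle rather than a degenerate conic through $\circ_+,\circ_-$: this is exactly the content of the cited classical facts on complete conics, together with a transversality check for generic $p_0$.

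Next I would enumerate the subproblems. Fixing one branch at each $i$, the relevant circles are the conics through $\circ_+,\circ_-$ satisfying the chosen line/point conditions; since $\circ_+,\circ_-$ are general in the sense of intersection theory (as established in the proof of \Cref{thm_main}), these counts are the classical numbers $\widetilde{H_{p}}^{5}=1$, $\widetilde{H_{p}}^{4}\widetilde{H_{\ell}}=2$, $\widetilde{H_{p}}^{3}\widetilde{H_{\ell}}^{2}=4$, and $\widetilde{H_{p}}^{2}\widetilde{H_{\ell}}^{3}=4$ from \cite[p.~307]{3264AllThat}. Sorting the $3^{3}$ branch choices by how many entries are through-the-vertex: there are $8$ choices with no vertex, each giving $4$ circles with multiplicity $1$; there are $12$ with one vertex, each giving $4$ circles (through one $p_i$, tangent to two of the chosen lines) with multiplicity $2$; there are $6$ with two vertices, each giving $2$ circles with multiplicity $4$; and there is $1$ with three vertices, giving $1$ circle with multiplicity $8$. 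This recovers $8\cdot 4\cdot 1+12\cdot 4\cdot 2+6\cdot 2\cdot 4+1\cdot 1\cdot 8=32+96+48+8=184$.

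Then I would rerun the count over $\mathbb{R}$. For real $p$ near $p_0$, a real tritangent circle limits to a \emph{real} circle of one of the four types; at such a real limit circle the number of solutions of the perturbed system that cluster there is at most its local multiplicity, whereas a non-real limit circle comes in a conjugate pair and contributes no real solutions. Hence the real count near $p_0$ is at most the sum over branch choices of (number of real circles in that subproblem) $\times$ (multiplicity). All but one subproblem can be made fully real: there are $4$ real circles tangent to three real lines (the incircle and three excircles of the resulting triangle), $2$ real circles through two real points and tangent to a real line, and exactly one real circle through three real points. The exception is the subproblem of circles tangent to two real lines and through one real point: its complex count is $4$ (viewing these as conics through $p_i,\circ_+,\circ_-$ and tangent to two lines), but at most $2$ of them are real, by \cite{CirclesAndSpheres} --- this is precisely the discrepancy noted in the introduction, arising because $\circ_+$ and $\circ_-$ form a conjugate pair rather than a general real point. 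Thus the one-vertex type contributes at most $12\cdot 2\cdot 2=48$ real solutions instead of $96$, and the real count near $p_0$ is at most $32+48+48+8=136$.

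The principal obstacle is the local analysis near the degenerate fiber used in the last step: one must verify that no real solutions escape to infinity or onto other components of the real discriminant as $p\to p_0$, and that exactly $2$ (respectively $1$) perturbed solutions cluster along each through-the-vertex (respectively tangent-to-a-line) branch of a limit circle, so that a non-real limit circle genuinely contributes $0$ and a real one contributes at most its multiplicity. For a generic pair-of-lines configuration $p_0$ this is governed by the transversality statements for complete conics cited above. An arbitrary singular triple is a limit of such configurations, and the extra degeneracies only merge limit circles and decrease the subproblem counts, so the bound $136$ persists; carrying out this last reduction carefully is the remaining technical point.
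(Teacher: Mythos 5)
Your argument is essentially correct and arithmetically consistent, but it follows a genuinely different degeneration than the paper. You specialize each $Q_i$ to a pair of distinct real lines, so that the tangency condition splits as tangent to $\ell_i$ or to $\ell_i'$ (each with multiplicity $1$) or passing through the vertex $p_i$ (multiplicity $2$), and you sum over the $27$ branch types; the paper instead follows \cite{3264RealConics} and degenerates to three double lines with a marked point, so that each condition reads ``through $p_i$ or tangent to $\ell_i$'', both with multiplicity $2$. There the bookkeeping is $23$ complex limit circles, at most $17$ of them real, each accounting for a cluster of $2^3=8$ nearby solutions, giving $8\cdot 23=184$ and $8\cdot 17=136$. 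Both routes locate the real deficiency in the same subproblem --- only $2$ of the $4$ circles through a real point and tangent to two real lines are real, by \cite{CirclesAndSpheres} --- and both land on $136$; your counts $32+96+48+8=184$ and $32+48+48+8=136$ are correct. What the paper's degeneration buys in addition is attainment: replacing each pair $(p_i,\ell_i)$ by a nearby hyperbola produces two real circles per branch and hence exactly $2^3\cdot 17=136$ real circles, which is how \Cref{thm:136_R_intro} is deduced. Your route proves the upper bound asserted in \Cref{thm:136_R}, but to get existence you would still need to exhibit a configuration in which every relevant limit circle is real and each multiplicity-$2$ (or higher) cluster splits into real points.

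One caveat on your final reduction: the claim that an arbitrary singular triple is covered because further degenerations ``only lower the counts'' is not right as stated --- the paper's double-line configuration is exactly such a further degeneration and it attains $136$ again, not less --- and a neighborhood of a more degenerate triple is not contained in neighborhoods of generic line-pair triples, so the bound there does not formally follow from your generic case; each degenerate type (double lines, pairs with a common vertex, shared components, complex-conjugate line pairs) would need its own limit analysis. To be fair, the paper's own proof is no broader, since it only analyzes the triangle configuration, and the transversality and no-escape points you flag are treated there at the same informal level as in your sketch.
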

\begin{proof}
We adapt the argument of \cite{3264RealConics} as presented in \cite[Ch. 7]{RealSolutionsGeometry} of deforming a special configuration of conics. Suppose $\ell_{1},\ell_{2},\ell_{3}$ are lines supporting the edges of a triangle and $p_{i}\in\ell_{i}$ for $1\leq i\leq 3$ are points in the interior of the corresponding edge. 

We consider a subset of the lines $S \subseteq \{\ell_1,\ell_2,\ell_3 \}$ and a subset $P_S \subseteq \{p_1,p_2,p_3 \}$ of the points, such that for all $p_i \in P_S$, $\ell_i \not\in S$.
For every subset $S \subseteq \{ \ell_1, \ell_2, \ell_3 \}$ of the lines, there are 
$$\widetilde{H_{\circ_{+}}}\cdot\widetilde{H_{\circ_{-}}}\cdot\widetilde{H_{\ell}}^{|S|}\cdot\widetilde{H_{p}}^{3-|S|}=2^{\,\min\{|S|-2,\ 3-|S|\} + 2}$$
complex circles that are tangent to the lines in $S$ and meet the $3-|S|$ points in $P_S$. \Cref{fig2} shows an example of a circle that is tangent to three lines (i.e., $|S|=3$).

Note, however, for $|S|=2$, there are $4=2^2$ complex but only $2$ real circles tangent to two lines and passing through a point \cite{CirclesAndSpheres}. Altogether, this gives
$$\sum_{k=0}^{3}2^{\,\min\{k-2,\ 3-k\}+2}\,\binom{3}{k}=1\cdot\binom{3}{0}+2\cdot\binom{3}{1}+4\cdot\binom{3}{2}+4\cdot\binom{3}{3}=23$$
complex circles, but only 
$$1\cdot\binom{3}{0}+2\cdot\binom{3}{1}+2\cdot\binom{3}{2}+4\cdot\binom{3}{3}=17$$
real circles that for each $1\leq i\leq 3$ either meet $p_{i}$ or are tangent to $\ell_{i}$. 

With an asymmetric configuration, exactly $17$ of the real circles meet each point~$p_{i}$ and none of the $17$ tangent to $\ell_{i}$ are tangent at the point $p_{i}$. We now replace each pair $(p_{i},\ell_{i})$ with a smooth hyperbola $h_{i}$ that is asymptotically close to it: a hyperbola whose branches are close to $\ell_{i}$ and flex points close to $p_{i}$. 
If we do this for a pair $(p_{i},\ell_{i})$ then for every conic in our configuration there will be two nearby circles tangent to~$h_{i}$ -- one at each branch of the hyperbola. Replacing each pair $(p_{i},\ell_{i})$ by $h_i$ for $1\leq i\leq 3$, we get $2^{3}\cdot17=136$ real circles, proving our claim. 
\end{proof}

\Cref{thm:136_R} implies \Cref{thm:136_R_intro}. In the next section we give a constructive proof of~\Cref{thm:136_R_intro}.

\subsection{The real discriminant}
As a step towards the resolution of \Cref{conj_real} we characterize the real discriminant of the polynomial system \eqref{polynomial_system}, whose solutions describe circles tangent to three conics.

The real discriminant $\Delta \subseteq \RR^{18}$ of \eqref{polynomial_system} is a hypersurface in the space of real parameters where the parameters $p=(a_1,\ldots,a_6,b_1,\ldots,c_6) \in \Delta$ if and only if the number of real circles tangent to the three real conics defined by $p$ is not locally constant. 
We call such real parameters and the corresponding arrangement of conics \emph{degenerate}. In other words, $\Delta$ divides the parameter space  $\RR^{18}$ into open cells in which the number of real solutions to \eqref{polynomial_system} is constant. 

\begin{theorem}[The real discriminant]\label{thm:real_disc}
Let $\{ i,j,k \}  = \{1,2,3\}$.
An arrangement of three real conics $Q_1,Q_2,Q_3$ is degenerate, if and only if one the following holds:
\begin{enumerate}
\item There is a real line tangent to $Q_1,Q_2,Q_3$.
\item $Q_1,Q_2$ and $Q_3$ intersect in a real point.
\item $Q_i$ is singular at a real point $(u_i,v_i)$, and there is a real circle tangent to $Q_j,Q_k$ that passes through $(u_i,v_i)$.
\item $Q_i$ and $Q_j$ meet tangentially in a real point.
\item There exists a real circle $C$ that is tangent to $Q_1,Q_2,Q_3$ at the real points $(u_1,v_1), (u_2,v_2), (u_3,v_3)$, respectively, and the curvature of $C$ equals the curvature of $Q_i$ at~$(u_i,v_i)$ and the normal vectors $\nabla C(u_i,v_i)$ and $\nabla Q(u_i,v_i)$ point in the same direction.

\end{enumerate}
\end{theorem}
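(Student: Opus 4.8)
The plan is to run the standard analysis of the discriminant of a parametrized polynomial system, as in \cite[Ch.~7]{RealSolutionsGeometry}. Writing $\mathcal V\subseteq\mathbb{C}^{9}\times\mathbb{C}^{18}$ for the locus $F(x;p)=0$ and restricting to the ``honest'' part $\mathcal V^{\circ}=\{(x,p)\in\mathcal V:r\neq 0\}$ (note that $C=(x-sz)^{2}+(y-tz)^{2}-r^{2}z^{2}$ is a smooth conic exactly when $r\neq 0$), projection to parameter space is finite of degree $184$ over a dense open set by \Cref{thm_main}. Over $\mathbb{R}$, the number of real points of a fiber is locally constant away from (a) the \emph{real branch locus}, where some real solution $x$ satisfies $\det D_{x}F(x;p)=0$, and (b) the \emph{boundary locus}, where a real solution leaves $\mathcal V^{\circ}$ --- either by running to infinity or by hitting $\{r=0\}$. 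The whole proof reduces to identifying (a) and (b) with conditions (1)--(5) and then checking that crossing each of them genuinely changes the real count.

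For the branch locus I would compute $D_{x}F$ and use its block structure: grouping the nine equations into the triples $(f_{i},f_{3+i},f_{6+i})$ and the nine variables into the pairs $(u_{i},v_{i})$ together with $(s,t,r)$, the columns for $(u_{j},v_{j})$ are supported only on the rows of the $j$-th triple. Expanding the determinant along these blocks and using the key fact that at a solution the first two rows of the $i$-th block, $\nabla Q_{i}(u_{i},v_{i})$ and $\nabla C(u_{i},v_{i})$, are parallel --- this is precisely the equation $f_{6+i}=0$ --- kills all but one term and yields, up to a nonzero scalar factor,
\[
\det D_{x}F \;=\; r\,\big((u_{2}-u_{1})(v_{3}-v_{1})-(u_{3}-u_{1})(v_{2}-v_{1})\big)\,\prod_{i=1}^{3}\det\big(\nabla Q_{i}(u_{i},v_{i})\ \ \nabla_{(u_{i},v_{i})}f_{6+i}\big),
\]
the last factors being $2\times 2$ determinants of the two listed gradient vectors. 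I would then read off the vanishing of each factor at a real solution. The factor $r$ vanishes iff the circle degenerates to a point, which over $\mathbb{R}$ (since $(u_{i}-s)^{2}+(v_{i}-t)^{2}=0$ forces $u_{i}=s,\,v_{i}=t$) means exactly that $Q_{1},Q_{2},Q_{3}$ pass through the common real point $(s,t)$ --- condition (2). The middle factor is twice the signed area of the triangle $(u_{1},v_{1}),(u_{2},v_{2}),(u_{3},v_{3})$; since three distinct collinear points cannot lie on a smooth circle, it vanishes iff two of the tangency points coincide, and a point lying on two of the conics at which both are tangent to $C$ is exactly a point where those two conics meet tangentially --- condition (4). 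Finally the $i$-th determinant vanishes iff either $\nabla Q_{i}(u_{i},v_{i})=0$, i.e.\ $Q_{i}$ is singular at the tangency point --- then $f_{6+i}=0$ is automatic and all that remains is a real circle through that point tangent to $Q_{j},Q_{k}$, which is condition (3) --- or $\nabla Q_{i}(u_{i},v_{i})\neq 0$ and is parallel to $\nabla_{(u_{i},v_{i})}f_{6+i}$; by the classical criterion for third-order contact of plane curves (equivalently: matching curvature plus matching normal direction, i.e.\ $C$ is the osculating circle of $Q_{i}$ at $(u_{i},v_{i})$) this is condition (5). I would record this last equivalence with the standard implicit-curve curvature formula.

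For the boundary-at-infinity part I would compactify the family of circles by the $\mathbb{P}^{3}$ with coordinates $[A:D:E:F]$ parametrizing $A(x^{2}+y^{2})+Dxz+Eyz+Fz^{2}$, whose boundary $A=0$ consists of the line at infinity together with an arbitrary line. A real solution of \eqref{polynomial_system} escapes to infinity iff $C$ degenerates onto such a boundary conic; passing the tangency conditions to the limit, the residual line must be tangent in $\mathbb{P}^{2}_{\mathbb{R}}$ to all three conics --- condition (1). The only alternative escape --- a tangency point going to infinity while $C$ stays bounded --- would force $C$ to pass through a circular point \eqref{circular_points} and hence force some $Q_{i}$ to be a circle, a codimension-two condition that contributes nothing to the hypersurface $\Delta$.

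The main obstacle is the converse: showing that each of (1)--(5) actually lies on $\Delta$, i.e.\ that the real count is not locally constant there. For (4) and (5) this is a textbook quadratic fold --- two real circles collide into a complex conjugate pair --- and one only checks the relevant second-order nondegeneracy. The delicate cases are (1), (2) and (3), where the degeneration is not a plain fold: the relevant branch of the solution curve is governed by a Puiseux expansion ($r\to\infty$, $r\to 0$, $\nabla Q_{i}\to 0$), and one must argue, following the local analysis used for Steiner's problem in \cite{3264RealConics}, that a real branch of solutions genuinely crosses from one side of the wall to the other. This is also where the existence hypotheses built into (3) and (5) are used: a real point of the branch locus separates cells of different real count only when the colliding solution branches are real on one of the two sides.
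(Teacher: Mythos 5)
Your skeleton coincides with the paper's: the discriminant is the locus of parameters admitting either a real solution that escapes (to infinity, or to $r=0$) or a real solution at which the Jacobian $J_x$ is singular, and the substance is matching these with conditions (1)--(5). The difference is in how the singular locus is analyzed. The paper works with the kernel of $J_x$, constructing or excluding a nonzero vector $(\dot u_i,\dot v_i,\dot s,\dot t,\dot r)$ annihilated by each block $J_x^{(i)}$, and the curvature condition (5) falls out of that computation; you instead expand $\det J_x$ along the column blocks of $(u_1,v_1),(u_2,v_2),(u_3,v_3)$ and factor it. Your factorization is correct, but your stated justification is incomplete: in the block Laplace expansion the terms where the tangency row of block $i$ is paired with the $(s,t,r)$-columns die because $\det\bigl(\nabla C(u_i,v_i)\ \ \nabla Q_i(u_i,v_i)\bigr)=0$ at a solution (your parallel-gradients fact), while the terms where the $Q_i$-row is paired with those columns die for a different reason you should state, namely that $Q_i$ does not depend on $(s,t,r)$, so that row of the corresponding block of partial derivatives is zero; only then is the surviving term the product $r\cdot(\text{signed area})\cdot\prod_i\det\bigl(\nabla Q_i\ \ \nabla_{(u_i,v_i)}f_{6+i}\bigr)$ up to a nonzero constant. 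Granting that, your reading of the factors reproduces the paper's cases (2), (4), (3)/(5), and for case (4) your argument is actually cleaner: the paper's claim that the blocks $A^{(i)},B^{(i)}$ and $A^{(j)},B^{(j)}$ coincide is dubious (the gradients of $Q_i$ and $Q_j$ at the common point are only parallel), whereas the vanishing of your area factor settles singularity at once. Your boundary analysis via the $\mathbb{P}^3$ of circles is equivalent to the paper's argument that only $(s,t,r)$ can blow up; the aside about a tangency point limiting to a circular point forcing $Q_i$ to be a circle is off the mark --- such a limit is simply impossible because the circular points are not real, which is the paper's (simpler) argument. Finally, the converse you flag as the main obstacle --- that each wall is genuinely crossed, so the real count actually changes --- is not carried out in the paper either; it identifies the discriminant with the ``solution at infinity or singular real solution'' locus and stops, so on that point your plan is no less complete than the published proof, only more explicit about what remains.
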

\begin{proof}

The discriminant consists of those real parameters $p\in\mathbb R^{18}$, where the polynomial system $F(x;p)=(f_1(x;p),\ldots,f_9(x;p))$ from \Cref{polynomial_system} has
\begin{enumerate}
\item a real solution at infinity.
\item a real solution $x\in \mathbb R^{9}$, such that the Jacobian matrix $$J_x=J_x(x;p)=\begin{bmatrix}\frac{\partial f_1(x;p)}{\partial x}& \ldots & \frac{\partial f_9(x;p)}{\partial x}\end{bmatrix}^T\in\mathbb R^{9\times 9}$$ of $F$ at $x$ is singular.
\end{enumerate} 

Let us first consider when a real solution goes off to infinity. First note that circles have only two points at infinity, namely $[ 1 : \pm i : 0 ]$. These are non-real points, so not limits of real solutions. Therefore, in order to have a real solution go to infinity, we must have at least one of $(a,b,r)$ go to infinity. If $r$ goes to infinity then it is a circle of infinite radius, which has curvature $0$ so it is a line. If $r$ is bounded then the circle converges to either $(x-a)^2 = 0$ or $(y-b)^2 = 0$ or $(x-a)^2 + (y-b)^2 = 0$. In any of these cases it is either a line or a point at infinity. Therefore, two real solutions go to infinity when there is a line tangent to all three conics, which proves the statement.

The rest of the proof consists of showing that cases (2)--(5) correspond exactly to those situation we the Jacobian matrix is singular.

If $r=0$, the three conics $Q_1,Q_2$ and~$Q_3$ must intersect in a point $(u,v)$ and we have the circle $(x-u)^2 + (y-v)^2 = 0$ tangent to all three conics, which gives a singular solution to the system (\ref{polynomial_system}). This is the second item above. Therefore, in the following we assume that $r\neq  0$.

The polynomial system in (\ref{polynomial_system}) consists of three triplets of polynomials, namely:
$$\psi_i(u_i,v_i,a,b,r):=\begin{pmatrix}
C(u_i,v_i)\\
Q_i(u_i,v_i)\\
\nabla C(u_i,v_i) \wedge
\nabla Q_i(u_i,v_i)
\end{pmatrix}\in\mathbb R^3 \text{ for } i = 1,2,3.
$$
Let $J_x^{(i)}$ denote the Jacobian matrix of $\psi_i$ at $x$. Then,
\begin{equation}\label{J_x_eq}
J_x = \begin{bmatrix} J_x^{(1)} \\ J_x^{(2)} \\ J_x^{(3)}\end{bmatrix} = \begin{bmatrix} A^{(1)} & 0 & 0 & B^{(1)} \\ 
    0 & A^{(2)} &  0 & B^{(2)} \\
    0 & 0 & A^{(3)} &  B^{(3)},
\end{bmatrix}
\end{equation}
where $A^{(i)}\in\mathbb R^{3\times 2}$ contains the partial derivatives of $\psi_i$ with respect to $(u_i,v_i)$, and $B^{(i)}\in\mathbb R^{3\times 3}$ contains the partial derivatives of $\psi_i$ with respect to $a,b,r$.

We have $\nabla C(x,y) = 2(x-a,y-b)^T$ and so
$$\frac{\partial\nabla C(u_i,v_i)}{\partial a} =
\begin{pmatrix}
-2\\0
\end{pmatrix},\quad 
\frac{\partial\nabla C(u_i,v_i)}{\partial b} =
\begin{pmatrix}
    0\\-2
    \end{pmatrix},\quad 
\frac{\partial\nabla C(u_i,v_i)}{\partial r} = \begin{pmatrix}
    0\\0
    \end{pmatrix}.$$
This shows that for $(\dot u_i,
\dot v_i,
\dot a,
\dot b,
\dot r)^T \in\mathbb R^5$ we have 
\begin{align}\label{jacobian_eq}
J_x^{(i)} \,\begin{pmatrix}
\dot u_i\\
\dot v_i\\
\dot a\\
\dot b\\
\dot r
\end{pmatrix} &= A^{(i)} \,\begin{pmatrix}
\dot u_i\\
\dot v_i
\end{pmatrix} + B^{(i)} \,\begin{pmatrix}
    \dot a\\
    \dot b\\
    \dot r
    \end{pmatrix}\\
    &=
\begin{pmatrix}
2\begin{pmatrix}\dot u_i \\ \dot v_i \end{pmatrix}^T \nabla C(u_i,v_i) + \dot C(u_i,v_i)\\[0.3em]
2\begin{pmatrix}\dot u_i \\ \dot v_i \end{pmatrix}^T \nabla Q(u_i,v_i)\\[0.3em]
2\begin{pmatrix}\dot u_i - \dot a\\ \dot v_i - \dot b\end{pmatrix} \wedge \nabla Q_i(u_i,v_i) + \nabla C (u_i,v_i) \wedge \mathrm H(Q_i)\begin{pmatrix}\dot u_i \\ \dot v_i \end{pmatrix} ,
\end{pmatrix}\nonumber
\end{align}
where $\dot C(x,y) = -2(x-a)\dot a-2(y- b)\dot b - 2r\dot r$ and 
$$\mathrm H(Q_i) = \begin{bmatrix}
\ \frac{\partial^2 Q_i}{\partial^2 u_i}\ & \ \frac{\partial^2 Q_i}{\partial u_i\partial v_i}\ \\[0.5em]
\ \frac{\partial^2 Q_i}{\partial u_i\partial v_i}\ & \ \frac{\partial^2 Q_i}{\partial^2 v_i}\
\end{bmatrix}$$
is the Hessian of $Q_i$ at $(u_i,v_i)$. 
Notice that $\dot C(x,y)$ is an affine linear function.

We have to show that the cases (3)--(5) above give exactly those situations, where we can find a nonzero vector $(\dot u_i,
\dot v_i,
\dot a,
\dot b,
\dot r)^T \in\mathbb R^5$ such that the vector in (\ref{jacobian_eq}) is equal to $0$ for each $i=1,2,3$. Since each of the equations is homogeneous in $u_i,v_i$, we can assume that
\begin{equation}\label{eq1}u_1^2 + v_1^2 = u_2^2 + v_2^2 =u_3^2 + v_3^2 =1.\end{equation}

If $Q_i$ is singular, we have a point $(u_i,v_i)$ with $Q_i(u_i,v_i)=0$ and $\nabla Q_i(u_i,v_i)=0$. The point $(u_i,v_i)$ is part of a solution $F(u_1,\ldots, v_3,a,b,r)=0$, if and only if there is a circle that is tangent to the other two conics $Q_j$ and $Q_k$ at $(u_j,v_j)$ and $(u_k,v_k)$, respectively, and that passes through $(u_i,v_i)$. 
For this data $J_x \,(
\dot u_i,
\dot v_i,
\dot a,
\dot b,
\dot r)^T=0$ becomes a system of $8$ linear equations in $9$ variables. This always has a nontrivial solution.

Next, since $C$ is tangent to $Q_i$ at $(u_i,v_i)$, we have $\nabla C(u_i,v_i)\wedge \nabla Q_i(u_i,v_i) = 0$; i.e, $\nabla C(u_i,v_i)$ is a multiple of~$\nabla Q(u_i,v_i)\neq 0$. The second entry in (\ref{jacobian_eq}) then implies 
$$\begin{pmatrix}\dot u_i \\ \dot v_i \end{pmatrix}^T \nabla C(u_i,v_i)=0,$$
so that~$\dot C(u_i,v_i)=0$ by the first entry. Unless $\dot C=0$, this implies that the three points $(u_1,v_1),(u_2,v_2),(u_3,v_3)$ lie on a line. Since they also lie on the circle of positive radius $r>0$, this implies $(u_i,v_i)=(u_j,v_j)$ for at least one pair $i\neq j$. But then $Q_i$ and~$Q_j$ intersect tangentially at $(u_i,v_i)$. In this case, in (\ref{J_x_eq}) we get $A^{(i)} = A^{(j)}$ and $B^{(i)} = B^{(j)}$, which gives a singular Jacobian $J_x$. This shows that item 4 above gives singular solutions.

So, outside the discriminant we must have $\dot C=0$; i.e., $\dot a=\dot b=\dot r=0$. Since~$\psi_i$ does not depend on $(u_j,v_j)$ for $j\neq i$, this means in order to understand when $J_x$ is singular, it is now enough to study when the equations in (\ref{jacobian_eq}) vanish. 
For $\dot C=0$ the third equation in (\ref{jacobian_eq}) becomes
\begin{equation}\label{jacobian_eq2}
2\begin{pmatrix}\dot u_i\\ \dot v_i \end{pmatrix} \wedge \nabla Q_i(u_i,v_i) -  \mathrm H(Q_i)\begin{pmatrix}\dot u_i \\ \dot v_i \end{pmatrix}\wedge  \nabla C (u_i,v_i)=0.
\end{equation}
We have $(\nabla C (u_i,v_i))^T \nabla C (u_i,v_i) = 4r^2>0$ and $(\nabla Q (u_i,v_i))^T \nabla Q (u_i,v_i)>0$, since~$Q_i$ is smooth. Then, multiplying (\ref{jacobian_eq2}) by $2r\sqrt{(\nabla Q (u_i,v_i))^T \nabla Q (u_i,v_i)}$ and using (\ref{eq1}) and the fact that
$$\begin{pmatrix}\dot u_i \\ \dot v_i \end{pmatrix}^T \nabla Q(u_i,v_i) = \begin{pmatrix}\dot u_i \\ \dot v_i \end{pmatrix}^T \nabla C(u_i,v_i)=0$$
we have
$$\frac{\varepsilon_i}{r} = 
\begin{pmatrix}\dot u_i \\ \dot v_i \end{pmatrix}^T
\frac{\mathrm H(Q_i)}{\sqrt{(\nabla Q (u_i,v_i))^T \nabla Q (u_i,v_i)}}
\begin{pmatrix}\dot u_i \\ \dot v_i \end{pmatrix},$$
where $\varepsilon_i = 1$, if $\nabla Q(u_i,v_i)$ and $\nabla Q(u_i,v_i)$ point into the same direction, and $\varepsilon_i = -1$ otherwise. Since, $r^{-1}$ is the curvature of $C$ and the right hand side equals the curvature of~$Q_i$ at $(u_i,v_i)$, this shows that we have a singular solution also in the fifth item above. In all other cases, $J_x$ has a trivial kernel, hence is not singular. 
\end{proof}

\section{Hill climbing}\label{sec:hillClimbing}
\Cref{thm:136_R} shows that there exist three real conics that have 136 real circles tangent to all three, but it does not provide an explicit construction of the three conics. To find conics that exhibit this behavior, we rely on a numerical method known as \emph{hill climbing}. We adapt a method of Dietmaier in \cite{LinearProgram} to increase the count of real solutions.\footnote{Dietmaier's hill climbing algorithm was recently applied in \cite{TangentQuadrics} for generating instances of points, lines, and surfaces in 3-space with a maximal number of real quadrics, that contain the points and are tangent to the lines and surfaces.} For the ease of exposition, we outline our method below using matrix inverses. In our implementation we do not invert any matrices and instead we introduce auxiliary variables and solve an equivalent linear system, allowing for more numerically stable computations.

The basic idea of the hill climbing algorithm is as follows. Suppose we are given a set of parameters $p=(a_{1},\dots,a_{6},b_{1},\dots, c_{6})\in\mathbb{R}^{18}$ defining a configuration of three general conics in the plane. By \Cref{thm_main}, the system of polynomial equations $F(x;p)=0$ from \eqref{polynomial_system} has 184 complex solutions. To increase the number of real solutions, we iteratively perturb $p$ so that a complex conjugate solution pair first becomes a double real root then perturb $p$ once again to separate this double root resulting in two distinct real roots. Simultaneously, we ensure that no existing real solution vectors become arbitrarily close, forming a double root and eventually a complex conjugate pair and that solutions do not diverge to infinity. 

For fixed parameters $p \in \RR^{18}$, denote $S$ by
$$S=S_{\mathbb{R}}\sqcup S_{\mathbb{C}}\subseteq\mathbb{C}^{9}$$
the solutions of our system of polynomial equations \eqref{polynomial_system} where $S_{\mathbb{R}}$ is the set of solutions with only real entries and $S_{\mathbb{C}} = S\setminus S_{\mathbb{R}}$.

In the first step of the hill climbing algorithm, we select one solution $x^{*}\in S_{\mathbb{C}}$ in which we aim to decrease the $L^1$ norm $\Vert\mathrm{Im}(x^{*})\Vert_{1}$ of the vector of imaginary parts of $x^*$. The goal is to compute a step $\Delta p$ in the parameter space $\mathbb{R}^{18}$, such that the magnitude of the imaginary part of $x^{*}$ decreases as we move from $p$ to $p+\Delta p$ for some $-\varepsilon\mathbbm{1}\leq\Delta p\leq \varepsilon\mathbbm{1}$ where $\varepsilon$ is a small tolerance parameter and $\mathbbm{1}=(1,\ldots,1)^T\in\mathbb R^{18}$ is the all-one vector.

As in the previous section we denote by $J_{x}(x;p)\in\mathbb{C}^{9\times 9}$ the Jacobian matrix of $F(x;p)$ with respect to the variables $x=(u_{1},v_{1}, \dots, s,t,r)$ evaluated at $x$ with parameters $p$.
Similarly, $J_{p}(x;p)\in\mathbb{C}^{9\times 18}$ is the Jacobian of $F(x;p)$ with respect to the parameters $p=(a_{1},\dots, a_{6},b_{1},\dots,b_{6})$ evaluated at $x$ with parameters $p$. 
Following \cite{LinearProgram} we observe that differentiating both sides of $F(x;p)=0$ with respect to $p$ and $x$, gives the following matrix equation involving the step $\Delta p$:
\begin{equation}
    J_{x}(x;p)\Delta x + J_{p}(x;p)\Delta p=0. \label{matrixeqn}
\end{equation}
Solving \eqref{matrixeqn} for $\Delta x$, we have that 
\begin{align} \text{Im}(\Delta x) = -\text{Im}(J_x(x;p)^{-1}\cdot J_p(x;p) ) \cdot \Delta p, \label{eq:delta_x}
\end{align}
where $ \text{Im}(\cdot)$ denotes taking the componentwise imaginary part. Let $ \text{sign}(\cdot)$ denote the componentwise sign function.
In order to decrease $\lVert \Delta \text{Im}(x^*) \rVert_1$, we wish to minimize
\[ -\text{sign}(\text{Im}(x^*))^T \cdot \text{Im}(J_x(x^*;p)^{-1}\cdot J_p(x^*;p) ) \cdot \Delta p.\]
Notice that this objective function considers a first order approximation of our system $F(x,p)$ at $(x^*,p)$, so we add the constraint $-\varepsilon\mathbbm{1}\leq\Delta p\leq \varepsilon\mathbbm{1}$ to ensure that this approximation is accurate enough.

Next, we want to ensure that as we take a step in the parameter space, two existing real solutions do not come together to become non-real. Consider two real solutions $x_i, x_j \in S_{\RR}$. The distance between $x_i, x_j$ is the $L^2$ norm $D = \lVert x_i - x_j \rVert_2^2$. Differentiating $D$ with respect to $x$ yields
\[ 2 \cdot \langle x_i - x_j, \Delta x_i - \Delta x_j \rangle .\]
Substituting \eqref{eq:delta_x} in for $\Delta x_i, \Delta x_j$, we have an expression that gives the change in the distance between two real solutions $x_i,x_j$ as a function of the change in parameters. Since we do not want this distance to decrease, we impose the constraint
\[\forall x_i, x_j \in S_{\RR}: (x_i - x_j)^T \cdot \big( J_x(x_i;p)^{-1} \cdot J_p(x_i;p) - J_x(x_j;p)^{-1} \cdot J_p(x_j;p) \big) \cdot \Delta p \geq 0.\]

Finally, we want to ensure that a complex solution does not go off to infinity as we take a step in the direction $\Delta p$. To enforce this constraint, we consider the magnitude of every complex solution $x_i \in S_{\CC}$ and impose that the magnitude does not increase. Define the following $18\times 18$ block matrices
\begin{align*}
\widetilde{J_{x}}(x;p) &= 
\begin{bmatrix}
\ \text{Re}(J_{x}(x;p)) & 0\  \\ \ 0 & \text{Im}(J_{x}(x;p))\ \end{bmatrix}\in\mathbb{R}^{18\times18},\\[0.5em]
\widetilde{J_{p}}(x;p) &= 
\begin{bmatrix}
\ \text{Re}(J_{p}(x;p))\   \\ \  \text{Im}(J_{p}(x;p))\ 
\end{bmatrix}\in\mathbb{R}^{18\times18}
\end{align*} 
and consider the augmented vector
\[ \Tilde{x}_i = \begin{bmatrix}
\text{Re}(x_i) \\ \text{Im}(x_i)
\end{bmatrix}\in\mathbb R^{18}. \]
The magnitude of $x_i$ is the same as $\lVert \Tilde{x}_i \rVert_2^2$. Again, we differentiate $\lVert \Tilde{x}_i \rVert_2^2$ and use \eqref{eq:delta_x} (considering $\text{Re}(x_i)$ and $\text{Im}(x_i)$ as separate elements) to write
\[ \forall x_i \in S_{\CC} : \langle \Tilde{x}_i \cdot \Tilde{J}_x(x_i;p)^{-1} \cdot \Tilde{J}_p(x_i;p), \Delta p \rangle \geq 0. \]
This constraint ensures that the change in the magnitude of $x_i$ does not increase.

In summary, in the first step of our hill climbing algorithm we consider the linear program:
\footnotesize
\begin{align*}
    & \hspace{0.2cm} \min_{\Delta p} \  -\text{sign}(\text{Im}(x^*))^T \cdot \text{Im}(J_x(x^*;p)^{-1}\cdot J_p(x^*;p) ) \cdot \Delta p  \label{opt-C} \tag{Opt-$\CC$} \\
    \text{subject to} &\hspace{0.2cm} - \epsilon \cdot \mathbbm{1} \leq \Delta p \leq \epsilon \cdot \mathbbm{1} \\
    &\hspace{0.2cm} \forall \ x_i, x_j \in S_{\RR}: (x_i - x_j)^T \cdot \big( J_x(x_i;p)^{-1} \cdot J_p(x_i;p) - J_x(x_j;p)^{-1} \cdot J_p(x_j;p) \big) \cdot \Delta p \geq 0\\
    &\hspace{0.2cm} \forall x_i \in S_{\CC}: \langle \Tilde{x}_i \cdot \Tilde{J}_x(x_i;p)^{-1} \cdot \Tilde{J}_p(x_i;p), \Delta p \rangle \geq 0.
\end{align*}
\normalsize
So long as $\varepsilon$ is sufficiently small so that the first order approximation of $F(x;p)$ is accurate, an optimal solution to \ref{opt-C}, $\Delta p^{*}$, gives a step in the parameter space in which the magnitude of the imaginary part of $x^{*}$ decreases.

Algorithm~\ref{alg:new_p} repeatedly solves \eqref{opt-C} and updates $x^*$ until $\lVert \text{Im}(x^*) \rVert_{1}$ is sufficiently small. At this point, $x^*$ is close to being a singular real root.
 
\smallskip
\begin{algorithm}[hbt!]
\DontPrintSemicolon 
\KwIn{Parameters $p \in \mathbb{R}^{18}$ and a non-real solution $x^* \in \mathbb{C}^9$ such that $F(x;p) = 0$ where $F$ is as defined in \eqref{polynomial_system} and a tolerance $\epsilon >0$
}
\KwOut{
    Parameters $p' \in \RR^{18}$ and a non-real solution $x' \in \CC^9$ such that $F(x',p') = 0$ where $F$ is as defined in \eqref{polynomial_system} and $\lVert \text{Im}(x') \rVert_2 \leq \epsilon$
}
  \While{  $\lVert \mathrm{Im}(x^*) \rVert_2 > \epsilon$}
  {Solve the optimization problem \eqref{opt-C} for $\Delta p^*$ \;
    Set $p' = p + \Delta p^*$ \;
   Using parameter continuation, compute $S = \{x' \in \CC^9 \mid  F(p',x') = 0 \}$ 
    \; 
    Set $x^*:= \arg\min_{x \in S} \{ \lVert x - x^* \rVert_2 \} $     \; }
        {Return $ p',x' := x^*$}
\caption{Minimize imaginary norm}\label{alg:new_p}
\end{algorithm}
\smallskip

Given an (almost) singular real root, $x^{*}$ satisfying $F(x^{*};p)=0$, we separate it into two real roots by first setting the imaginary part of $x^*$ equal to zero and then adding a small quantity $\delta \in \RR^9$ component-wise yielding
$$x_{1}'=x^{*}+\delta, \hspace{0.5cm} x_{2}'=x^{*}-\delta\in\mathbb{R}^{9}.$$
We then find conics $\hat{p}\in\mathbb{R}^{18}$ close to $p$ that contain $x_1',x_2'$ as points of tangency by solving the following optimization problem:
\smallskip

\small
\begin{align*}
    \tag{Opt-$p$}\label{opt-p} &\hspace{0.2cm} \arg\min_{\hat{p}\in\mathbb{R}^{18}}\Vert \hat{p}-p\Vert^{2} \\
    \text{subject to} &\hspace{0.2cm}  f_{1}(x_{1}';\hat{p})=f_{1}(x_{2}';\hat{p})=0 \\
    &\hspace{0.2cm}  f_{2}(x_{1}';\hat{p})=f_{2}(x_{2}';\hat{p})=0 \\
    &\hspace{0.2cm}  f_{3}(x_{1}';\hat{p})=f_{3}(x_{2}';\hat{p})=0 \\
    &\hspace{0.2cm}  f_{7}(x_{1}';\hat{p})=f_{7}(x_{2}';\hat{p})=0 \\
    &\hspace{0.2cm}  f_{8}(x_{1}';\hat{p})=f_{8}(x_{2}';\hat{p})=0 \\
    &\hspace{0.2cm}  f_{9}(x_{1}';\hat{p})=f_{9}(x_{2}';\hat{p})=0
\end{align*}
\normalsize

Observe that now that $F(x_{1}';p)\neq0$ and $F(x_{2}';p)\neq0$, because we did not use $f_4,f_5,f_6$ for the constraints. In fact, it might not even be possible to find a parameter $\hat{p}$ such that $F(x_1',\hat p)=F(x_2',\hat p)=0$. For instance, if $x_1'=(u_1,\ldots,v_3,a,b,r)$, then there is no reason to expect that $(u_i,v_i)$ are on the circle defined by $a,b,r$. Nevertheless, computing $\hat p$ in in \eqref{opt-p} we find conics close to our original conics. We use parameter homotopy continuation (see, e.g., \cite[Section 7]{Sommese:Wampler:2005}) to find all $x$ such that $F(x; \hat{p}) = 0$ and select $x_1,x_2$ closest to $x_1',x_2'$.

While $x_{1}$ and $x_{2}$ are two distinct real roots, they are still close together, meaning they are close to being a complex conjugate pair. Therefore we would like to separate them so that they are further apart. 

Recall, that we can express the change in distance between two real solutions, $x_1,x_2$ as a function of the change in parameters $\Delta p$ by
\[ (x_1 - x_2)^T \cdot \big( J_x(x_1;p)^{-1} \cdot J_p(x_1;p) - J_x(x_2;p)^{-1} \cdot J_p(x_2;p) \big) \cdot \Delta p. \]
We would like to maximize this function still subject to the constraints above that $\Delta p$ is constrained to a small neighborhood and that none of the other real solutions become too close and none of the other complex solutions become too large. This is equivalent to solving:

\footnotesize
\begin{align*}
    \label{opt-R}\tag{Opt-$\RR$} &\hspace{0.2cm} \max_{\Delta p} \  (x_1 - x_2)^T \cdot \big( J_x(x_1;p)^{-1} \cdot J_p(x_1;p) - J_x(x_2;p)^{-1} \cdot J_p(x_2;p) \big) \cdot \Delta p  \\
    \text{subject to} & \hspace{0.2cm}- \epsilon \cdot \mathbbm{1} \leq \Delta p \leq \epsilon \cdot \mathbbm{1} \\
    &\hspace{0.2cm} \forall \ x_i, x_j \in S_{\RR}: (x_i - x_j)^T \cdot \big( J_x(x_i;p)^{-1} \cdot J_p(x_i;p) - J_x(x_j;p)^{-1} \cdot J_p(x_j;p) \big) \cdot \Delta p \geq 0  \\
    &\hspace{0.2cm} \forall x_i \in S_{\CC}: \langle \Tilde{x}_i \cdot \Tilde{J}_x(x_i;p)^{-1} \cdot \Tilde{J}_p(x_i;p), \Delta p \rangle \geq 0
\end{align*}
\normalsize

Combining these three optimization problems defines our hill climbing algorithm. It successively finds parameter values with higher numbers of real roots. We first repeatedly solve \ref{opt-C} to make the imaginary part of a given root sufficiently small resulting in a singular real root. We then use \ref{opt-p} to find a set of parameters that matches a small perturbation of our singular root, before applying \ref{opt-R} to separate then as real roots. This procedure is outlined in \Cref{alg:hill_climb}.
\smallskip

\begin{algorithm}[hbt!]
\DontPrintSemicolon 
\KwIn{Parameters $p \in \mathbb{R}^{18}$ and real and complex solutions $S_{\CC}$ and $S_{\RR}$ such that for all $x \in S_{\CC}$ and $x \in S_{\RR}$, $F(p,x) = 0$ and a tolerance $\epsilon$
}
\KwOut{
    Parameters $p' \in \RR^{18}$ where the number of non-real solutions to $F(p',x) = 0$ is strictly less than $|S_{\CC}|$
}
 Select $x^* \in S_{\CC}$ and run \Cref{alg:new_p} to output $p',x'$ \;
  Solve \eqref{opt-p} to obtain output $\hat{p}$ \;
   Solve \eqref{opt-R} \; 
      {Return $ p'$}
\caption{Hill climbing}\label{alg:hill_climb}
\end{algorithm}
\smallskip

We implement \Cref{alg:hill_climb} in \texttt{Julia} and provide the necessary code and documentation on our \texttt{MathRepo} page 

We can now explain how we prove \Cref{thm:136_R_intro}.
\begin{proof}[Proof of Theorem \ref{thm:136_R_intro}.]
We first generate a parameter $q\in\mathbb R^{18}$ that defines a triangle (three degenerate conics) as in the proof of \Cref{thm:136_R}. We know from the proof that in the neighborhood of $q$ there must be a parameter with $136$ real circles. So, we add a small random perturbation to $q$ and obtain a parameter $p$. This parameter is then used as the starting point for the hill climbing algorithm \Cref{alg:hill_climb}. Eventually, we get the following three conics: 
\smallskip

{\footnotesize
\begin{align*}
    Q_{1}& = \left(\frac{400141104595769}{2302676434480590430}\right) x^{2} + \left(\frac{5537854491843451}{2305843009213693952}\right) xy + \left(\frac{2379998783885947}{288230376151711744}\right) y^{2}\\
    & \hspace{0.7cm} - \left(\frac{ 5883336424977557}{288230376151711744}\right) x - \left(\frac{5057485722682341}{36028797018963968}\right) y + \left(\frac{2686777020175459}{4503599627370496}\right) \\[1em]
    Q_{2} &= \left(\frac{2326975324861901}{144115188075855872}\right)x^2 - \left(\frac{7017759077361941}{576460752303423488}\right)xy + \left(\frac{5286233514864229}{2305843009213693952}\right)y^{2} \\
    & \hspace{0.7cm} + \left(\frac{3536130883475143}{18014398509481984}\right) x - \left(\frac{5331739727004679}{72057594037927936}\right) y + \left(\frac{5373554039379455}{9007199254740992}\right) \\[1em]
    Q_{3} &= \left(\frac{6288284117996449}{576460752303423488}\right) x^{2} - \left(\frac{8069853070614251}{288230376151711744}\right)xy + \left(\frac{1293970525023733}{72057594037927936}\right)y^{2}\\
    & \hspace{0.7cm} - \left(\frac{1453444402131837}{9007199254740992}\right)x + \left(\frac{7458321785480773}{36028797018963968}\right) y + \left(\frac{2686777019781135}{4503599627370496}\right)
\end{align*}}
\smallskip

Using the software \texttt{HomotopyContinuation.jl} \cite{HomotopyContinuation} we solve the system of polynomial equations \Cref{polynomial_system} and get 136 real solutions (in floating point arithmetic). These 136 numerical solutions are then certified by interval arithmetic \cite{Certifying,HomotopyContinuation}. This gives a proof that the three conics above have indeed 136 tritangent real circles.
\end{proof}
A \texttt{Julia} file for certification of the above polynomial system is available on our \texttt{MathRepo} page. While \Cref{alg:hill_climb} never found an instance of conics with more than $136$ tritangent real circles and \Cref{thm:136_R} shows that similar arguments in~\cite{3264RealConics} cannot be used to show that there exist three conics with $184$ tritangent real circles, it does not exclude the possibility that such conics do exist. That being said, we conjecture that the maximum number of real circles tangent to three general, real conics is $136$ and we interpret \Cref{thm:136_R} and our numerical experiments running \Cref{alg:hill_climb} as strong evidence supporting this conjecture.

With the help of \Cref{alg:hill_climb} we also find 69 distinct parameters $p_0,\ldots,p_{68}\in\mathbb R^{18}$ such that the number of real circles corresponding to the conic arrangement defined by $p_k$ is $2k$. That is, for every even number $2k$ between 0 and 136 we find a parameter that gives $2k$ real circles. Using certification by interval arithmetic \cite{Certifying,HomotopyContinuation} we then have a proof for the next theorem.
\begin{theorem}\label{thm:all are possible}
Let $0\leq n\leq 136$ be an even number. Then, there exists a parameter $p\in\mathbb R^{18}$, which is outside the real discriminant, such that the conic arrangement corresponding to $p$ has exactly $n$ real circles that are tangent to these conics.
\end{theorem}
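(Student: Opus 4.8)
The proof will be computational, with the geometry of \Cref{thm:real_disc} explaining why it succeeds and guiding the search. First observe that for every real parameter $p$ complex conjugation permutes the $184$ complex solutions of \eqref{polynomial_system}, fixing the real ones and pairing up the non-real ones, so the number of real tritangent circles is always even; in the range of the theorem there are thus exactly $69$ possible values, namely $0,2,\dots,136$. The reason all of them occur is a discrete intermediate value argument. \Cref{thm:136_R_intro} furnishes a parameter realizing $136$ and (as we will verify below) there is also one realizing $0$; join them by a generic path $\gamma\colon[0,1]\to\mathbb R^{18}$. Such a $\gamma$ meets the real discriminant $\Delta$ only at smooth points of its components and transversally, and --- since by \Cref{thm:real_disc} each component of $\Delta$ corresponds to a single coalescence event (two real circles merging into a conjugate pair or the reverse, a conjugate pair of finite circles escaping to or returning from the line at infinity, or a point-circle of vanishing radius at a triple intersection) --- the real count changes by exactly $\pm 2$ at each crossing. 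Starting from count $0$ and ending at count $136$ in steps of $\pm 2$, every even value in between must be attained on some subinterval of $[0,1]$.

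To turn this into a proof we make it effective: for each $k\in\{0,\dots,68\}$ we exhibit an explicit $p_k\in\mathbb R^{18}$ with exactly $2k$ real tritangent circles and certify that $p_k$ lies outside $\Delta$. We locate the $p_k$ numerically, starting from the $136$-instance of \Cref{thm:136_R_intro} and driving the real count downward by running \Cref{alg:hill_climb} with its separation step \eqref{opt-R} replaced by the analogous \emph{minimization} step, which pushes a chosen pair of real solutions together until they leave the real locus; each time the number of real solutions first drops to a new even value we record a parameter, continuing all the way down to $0$, and then move slightly into the interior of the corresponding open cell of $\mathbb R^{18}\setminus\Delta$ by a small random perturbation, rechecking the count. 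For each candidate $p_k$ we compute the complete solution set of \eqref{polynomial_system} with \texttt{HomotopyContinuation.jl} \cite{HomotopyContinuation}, tracking all $184$ solutions by a parameter homotopy from a generic start system whose solutions are obtained by monodromy.

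Finally we certify. Applying the interval-arithmetic certification of \cite{Certifying,HomotopyContinuation} to the $184$ numerical solutions at $p_k$ produces $184$ pairwise disjoint boxes, each rigorously enclosing a unique and simple solution of \eqref{polynomial_system}; exactly $2k$ of these boxes consist of real numbers, with nonzero radius coordinate, while the other $184-2k$ form $92-k$ conjugate pairs, so $p_k$ has exactly $2k$ real circles. Moreover $p_k\notin\Delta$: the interval Newton step certifies that the Jacobian of $F$ is invertible at each real solution, so none of the items (3)--(5) of \Cref{thm:real_disc} occurs; and since all $184$ solutions are finite with $r\neq 0$ --- and one checks, by solving and certifying the corresponding auxiliary systems, that there is no real line tangent to all three $Q_i$ and no real common point of the three --- items (1) and (2) do not occur either. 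Carrying this out for $k=0,1,\dots,68$ proves the theorem. The real work is the search rather than the certification: \Cref{alg:hill_climb} and its descent variant are heuristics with no guarantee of visiting all $69$ cells, and the deformation argument of the first paragraph only promises that each count is realized \emph{somewhere} along a generic path, not that a specific path one can write down is provably generic; in practice the cells with small count, or those wedged between larger ones, may require several restarts with different random perturbations, and the one structural fact we really need --- that a single generic crossing of $\Delta$ changes the count by $\pm 2$ and not more --- is precisely the local picture of $\Delta$ recorded in \Cref{thm:real_disc}.
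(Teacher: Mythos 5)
Your proposal is correct and is essentially the paper's own argument: the paper proves \Cref{thm:all are possible} by using \Cref{alg:hill_climb} to produce 69 explicit parameters $p_0,\ldots,p_{68}$ with exactly $2k$ real tritangent circles and then certifying the solution counts rigorously by interval arithmetic \cite{Certifying,HomotopyContinuation}, with the data posted on the \texttt{MathRepo} page. Your intermediate-value discussion along a generic path and the descent variant of the hill climbing are only motivation/search heuristics (as you yourself note); in both your write-up and the paper the actual proof is carried by the certified explicit instances.
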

The data that proves this theorem can be found on our \texttt{MathRepo} page.

\begin{figure}
\begin{center}
\includegraphics[width = 0.95\textwidth]{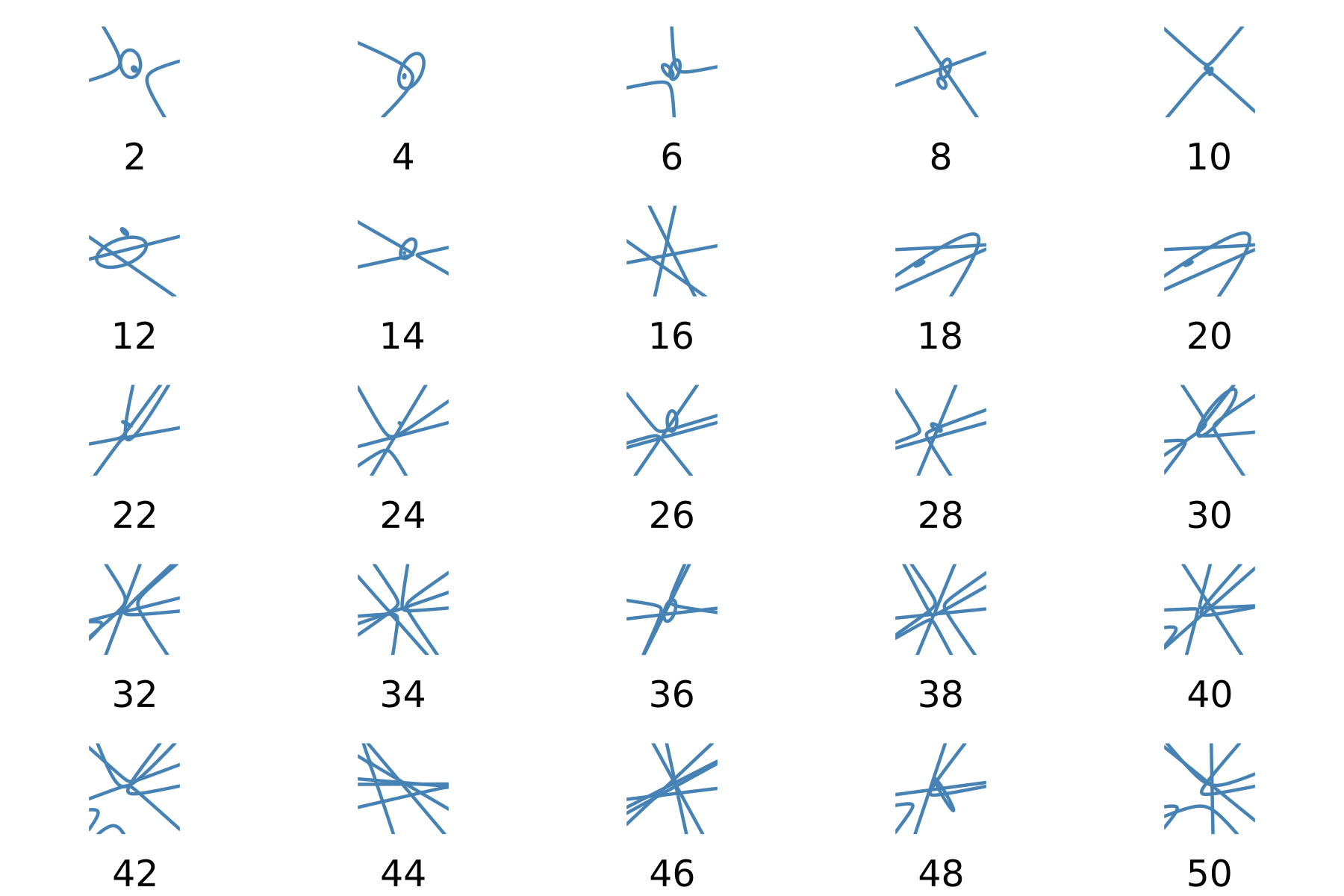}

\includegraphics[width = 0.95\textwidth]{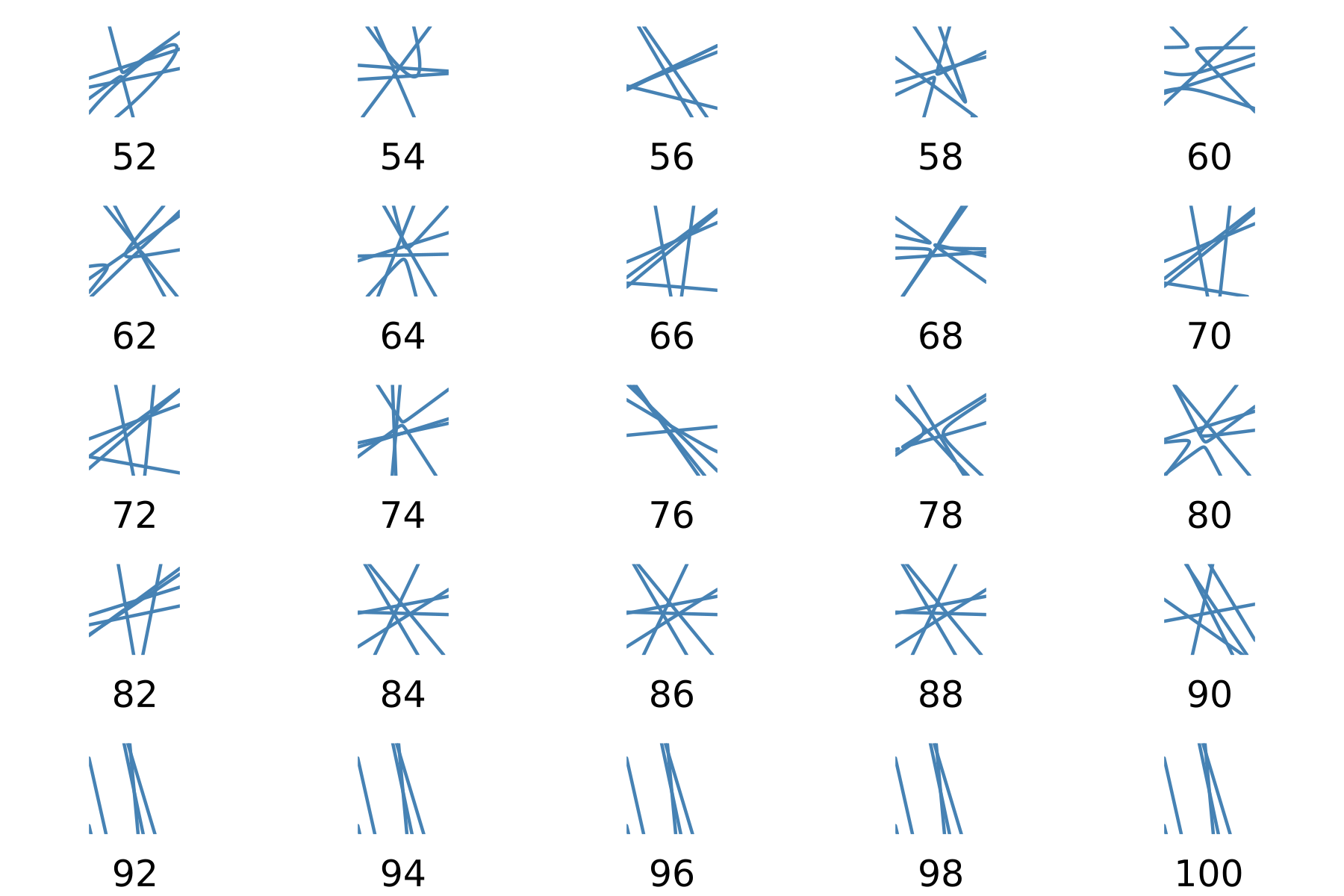}
\end{center}
\caption{\label{fig4} We show the first 50 conic arrangements that were used to prove \Cref{thm:all are possible}. The number below each plot indicates the number of real tritangent circles. Starting from 92 real circles the arrangement all look similar to a triangle as in \Cref{fig2}.}
\end{figure}

\section{Machine learning}\label{sec:ml}
In this section we investigate to what extent machine learning algorithms are able to input a real parameter vector $p\in\mathbb R^{18}$ and predict the number of real circles tangent to the three conics $Q_1,Q_2,Q_3$ defined by $p$. We use a supervised learning framework. This means our data conists of points
$(p,n)\in \mathbb R^{18} \times \{0,2,4,\ldots,184\},$
where $n$ gives the number of real circles corresponding to $p$ (by \Cref{conj_real}, we believe that $\{0,2,4,\ldots,136\}$ suffices). In the language of machine learning $p$ is called the \emph{input data} and $n$ is called the \emph{label} or \emph{output-variable}.

We first describe in the next section how we generate our data. Then, in \Cref{sec:our model} we explain our machine learning model and in \Cref{sec:evaluation} we discuss how well it performs.

\subsection{Data generation and encoding}\label{sec:data}

We consider two training data sets, $\mathcal{D}_1$ and $\mathcal{D}_2$.
To generate $\mathcal{D}_1$, we first sample parameters, $p \in \mathbb{R}^{18}$ defining the three conics $Q_1,Q_2,Q_3$ from a normal $\mathcal{N}(0_{18},I_{18})$ distribution where $0_{18} \in \RR^{18}$ is the all zeroes vector and $I_{18} \in \RR^{18 \times 18}$ is the identity matrix. We compute the number of real zeros $n$  using the software \texttt{HomotopyContinuation.jl} \cite{HomotopyContinuation}, and then we perform the hill climbing algorithm outlined in \Cref{alg:hill_climb}. Hill climbing is necessary in order to have samples with high numbers of real solutions.  To generate $\mathcal{D}_1$, we execute \Cref{alg:generate} for $M=50,000$.

\smallskip
\begin{algorithm}[hbt!]
\DontPrintSemicolon 
\KwIn{A number $M$}
\KwOut{A data set $\mathcal D_1$ with $\vert\mathcal D_1\vert = M$
}
\While{$\vert\mathcal D_1\vert<M$}{
Randomly select $p \in \RR^{18}$ from a normal $\mathcal{N}(0_{18},I_{18})$ distribution \;
Compute the number of real circles, $n$, tangent to conics $Q_1, Q_2$ and $Q_3$ whose coefficients are defined by $p$\;
Add $(p,y)$ to $\mathcal{D}_1$\;
Run \Cref{alg:hill_climb} with input $p$ to get output $p'$ and compute the number of real circles, $n'$, tangent to conics $Q_1',Q_2',Q_3'$ defined by $p'$\;
\uIf{$n'>n$}{
    add $(p',n')$ to $\mathcal{D}_1$ and repeat step $4$ with input $p'$\;
  }
\Else{
    Go back to Step $1$\;
}
}
\caption{Generate $\mathcal D_1$}\label{alg:generate} 
\end{algorithm}
\smallskip

To generate the training set $\mathcal{D}_2$, we sample $50,000$ data points independently from a normal $\mathcal{N}(0_{18},I_{18})$ distribution and find the number of real circles tangent to the corresponding conics using the software \texttt{HomotopyContinuation.jl}. 

The data sets $\mathcal{D}_1,\mathcal{D}_2$ are plotted in histograms in \Cref{fig_histo}. As one can see the purely random data $\mathcal D_2$ has a large number of instances with no real tritangent circles and concentrates around $20$ real tritangent circles with fast decay. In addition, $\mathcal{D}_2$ does not represent any arrangements of conics with more than 60 real circles. By contrast the data $\mathcal D_1$ is much more representative of arrangements with many real circles. This can be explained by the results in \cite{BKL2022, DL2022, LS2021} where the authors show that polynomials with Gaussian coefficients tend to have a ``simple'' topology with high probability. In our case, \cite{BKL2022, DL2022, LS2021} imply that the probability of having many real circles is exponentially small. This phenomenon can be seen in \Cref{fig_histo}.

\begin{figure}[!tbp]
    \includegraphics[width=0.7\textwidth]{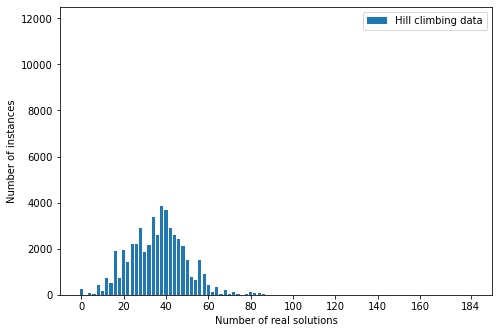}

    \includegraphics[width=0.7\textwidth]{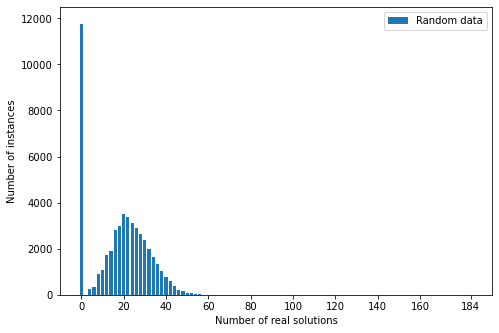}
    
  \caption{\label{fig_histo}Two histograms showing the distribution of the number of real circles tangent to a given configuration of conics. The top histogram shows the distribution on the data $\mathcal D_1$ we generated using the hill climbing algorithm and the bottom histogram shows the distribution on random data $\mathcal D_2$.}
\end{figure}

\subsection{The model} \label{sec:our model}
Since the number of real circles $n$ tritangent to $3$ conics is a discrete variable, our problem is a \emph{classification problem} -- the data space for the output variable consists of discrete points, called \emph{classes}. 

We found that turning our problem into a regression problem works better: Instead of having a response variable in $n \in \mathbb N$, we consider a \emph{statistical model}
$$(p,y)\in \mathbb R^{18} \times \Delta_{92},$$
where $\Delta_{92} = \{y\in\mathbb R^{93} \mid y_1+\cdots+y_{93}=1, y_i\geq 0\}$ is the $92$-dimensional standard simplex. The underlying idea is that $y$ defines a discrete random variable where $y_k:=\mathrm{Prob}(y=k)$ gives the probability that for given input parameters $p \in \RR^{18}$, the corresponding conics defined by $p$ have $2k$ real tritangent circles. For all parameters outside of the discriminant, there must be an even number of real solutions, so we have $1+\tfrac{184}{2}= 93$ possibilities for the number of real zeros. The data points from the previous subsection are then encoded as the vertices of the simples; i.e., given a number of real circles $n$ we associate to it the probability distribution $y$, where $\mathrm{Prob}(y=\tfrac{1}{2}n)=1$. This process is sometimes called \emph{one hot encoding}. It turns categorical data into data which can be used for regression problems. 

The goal of our machine algorithm is then to learn a function 
$$\phi: \mathbb R^{18} \to \Delta_{92}.$$
such that $\phi(p)$ is a good predictor for the number of real circles corresponding to $p$. Since $\phi(p)$ is a point in $\Delta_{92}$, we predict the number of real tritangent circles to the three conics defined by $p$ to be two times the maximum index of $\phi(p)$:
\[ 2 \cdot \arg\max \ \{ \phi(p)_{i-1} \ : \ 1 \leq i \leq 93\} \in \{0,2,\ldots,184\}.\]

We model $\phi$ using a \emph{multilayer perceptron} (MLP) \cite{HORNIK1991251}. A MLP is defined as the composition $\phi = \phi_1\circ\cdots\circ\phi_m$ of sub-functions $\phi_0,\ldots,\phi_m$ called \emph{layers}, where 
$$\phi_i(x) = \sigma_i(W_ix+b_i)$$
with matrices $W_{i}\in\mathbb R^{m_{i}\times m_{i+1}}$, vectors $b_i\in\mathbb R^{m_i}$, and a nonlinear \emph{activation function} $\sigma_i: \mathbb R^{m_i}\to\mathbb R^{m_{i}}$. The matrices $W_i$ are called \emph{weights} and the vectors $b_i$ are called \emph{biases}. In our model we use $m=2$ layers, and do not impose any sparsity constraint on the weights. That is, we use a \emph{fully connected model}. Both layers have $1,000$ neurons each, which means that $m_1=m_2=1,000$. As the nonlinear activation function we use the coordinate-wise \emph{ReLU function}  
$$\sigma_i((x_1,\ldots,x_m)) = (\max\{0,x_1\},\ldots, \max\{0,x_m\}),\quad i=1,2.$$ 
The activation function for the output is the
\emph{softmax function} 
$$\sigma_0((x_1,\ldots,x_m))=\frac{1}{\sum_{i=1}^m \exp(x_i)}\, (\exp(x_1),\ldots,\exp(x_m)).$$
We choose the \emph{categorical cross entropy loss function}
$$L(y, \hat y) = -\sum_{k=0}^{92} y_k\cdot\log(\hat y_k),$$
where $\hat y_k$ is the probability that there are $2k$ real solutions. In the process of training, the weights and biases are sequentially adjusted so that the loss function is minimized. 
To find the optimal weights and biases, we use stochastic gradient-based optimization.
Backpropagation calculates the gradient of the cost function with respect to the given weights \cite{Goodfellow-et-al-2016}. Our optimizitaion algorithm is \texttt{Adam} \cite{Adam}. \texttt{Adam} requires only first order gradients and computes individual adaptive learning rates. 
We use a batch size of 64.

We developed the model architecture during the data exploration phase. We tried several different architectures where we varied the number of layers, the number of neurons in each layer, and the activation functions. We found that networks with more than two hidden layers or fewer neurons had slower learning progress and resulted in worse accuracy on the validation set than our proposed model.

\subsection{Evaluation} \label{sec:evaluation}

We implemented the model outlined in \Cref{sec:our model} using TensorFlow \cite{tensorflow2015-whitepaper} with data sets $\mathcal{D}_1, \mathcal{D}_2$. We used an $80/20$ training-test-split and consider training our model in three ways: (1) using training data $\mathcal{D}_1$, (2) using training data $\mathcal{D}_2$ and (3) using training data $\mathcal{D}_1 \cup \mathcal{D}_2$. \Cref{tab:ml_eval} documents our results.

The values on the diagonal entries of \Cref{tab:ml_eval} are the validation results from training. All other results are computed on the whole set. We found that the accuracy on set $\mathcal{D}_1$ is very low, when training only on set $\mathcal{D}_2$ and vice versa which is not surprising considering how different the underlying distributions of $\mathcal{D}_1$ and $\mathcal{D}_2$ are. In addition, the purely random data from data set $\mathcal{D}_2$ is much harder to learn than $\mathcal{D}_1$.  
The model achieves validation accuracy of $97.47\%$ on $\mathcal{D}_1$ against a validation accuracy of only $47.33\%$ on $\mathcal D_2$. This means that when using $\mathcal{D}_2$ as training data, the model is over-fitting and learning random features from the data. There are many techniques to prevent this \cite{Goodfellow-et-al-2016} that can be explored in future work. Nevertheless, training on $\mathcal D_1$ works exceptionally well, even without any special techniques. 

\begin{table}
\begin{tabular}{|c|c|c|c|}
\hline 
    Training Data & Accuracy on $\mathcal{D}_1$ & Accuracy on $\mathcal{D}_2$ & Accuracy on $\mathcal{D}_1 \cup \mathcal{D}_2$\\\hline
    $\mathcal{D}_1$ & 95.59\% & 3.68\% & 49.88\%\\
    $\mathcal{D}_2$ & 3.53\% & 47.33\% & 45.69\%\\
    $\mathcal{D}_1 \cup \mathcal{D}_2$ & 90.76\% & 38.56\% & 60.20\% \\ \hline
\end{tabular}
\caption{The empirical results with the three different training sets} \label{tab:ml_eval}
\end{table}

It is reasonable to expect that $\mathcal D_1$ is easier to predict than $\mathcal D_2$, because the data set $\mathcal D_1$ represents a wider range of the behavior of the parameter space. Nevertheless, as $\mathcal D_1$ only contains parameters with up to 90 real tritangent circles, it does not capture the whole picture. We believe that the good performance of $\mathcal D_1$ is implied by the structure of the real discriminant from \Cref{thm:real_disc}. Learning the discriminant was approached in \cite{Bernal2020MachineLT}. It would be interesting to understand to what extent the real discriminant can be learned from $\mathcal D_1$.

\bibliographystyle{plain}
\bibliography{refs.bib}

\end{document}